\theoremstyle{plain}
\newtheorem{theorem}{Theorem}[section]
\newtheorem{lemma}[theorem]{Lemma}
\newtheorem{corollary}[theorem]{Corollary}
\theoremstyle{definition}
\theoremstyle{remark}
\newtheorem{remark}[theorem]{Remark}
\newtheorem*{remark*}{Remark}
\numberwithin{equation}{section}
\newcommand{\dosfilas}[2]{
  \ldelim[{2}{2mm}& #1 &\rdelim]{2}{2mm} \\
  & #2 & &  & &
}
\newcommand*\pFqskip{8mu}
\newcommand*\pFq{\begingroup
        \catcode`\,\active
        \def ,{\mskip\pFqskip\relax}%
        \dopFq
}
\def\dopFq#1#2#3#4#5{%
        {}_{#1}F_{#2}\biggl(\genfrac..{0pt}{}{#3}{#4};#5\biggr)%
        \endgroup
}
\newcommand\D{{\mathcal D}}
\newcommand\A{{\mathcal A}}
\newcommand\M{{\mathcal M}}
\newcommand\CC{{\mathbb C}}
\newcommand\ZZ{{\mathbb Z}}
\newcommand\NN{{\mathbb N}}
\newcommand\PP{{\mathbb P}}
\newcommand\Sh{\mbox{\Large $\mathfrak {s}$}}
   \title{Bispectral dual Hahn polynomials with an arbitrary number of continuous parameters.\footnote{Partially supported by PGC2018-096504-B-C31
(FEDER(EU)/Ministerio de Ciencia e Innovaci\'on-Agencia Estatal de Investigaci\'on),
FQM-262 and Feder-US-1254600 (FEDER(EU)/Jun\-ta de Anda\-lu\-c\'ia).}}
   \author{Antonio J. Dur\'{a}n\\
     \footnotesize
        \  Departamento de An\'{a}lisis Matem\'{a}tico.
       Universidad de Sevilla \\
       \footnotesize Apdo (P. O. BOX) 1160. 41080 Sevilla. Spain.
   duran@us.es \\
          \ \ }
   \date{}
\begin{document}
   \maketitle

\bigskip

\begin{abstract}
We construct new examples of bispectral dual Hahn polynomials, i.e., orthogonal polynomials with respect to certain superposition of Christoffel and Geronimus transforms of the dual Hahn measure and which are also eigenfunctions of a higher order difference operator.  The new examples have the novelty that they depend on an arbitrary number of continuous parameters. These are the first examples with this property constructed from the classical discrete families.
\end{abstract}

\section{Introduction and results}
The explicit solution of certain mathematical models of physical interest often depends on the use of
special functions. In many cases, these special functions turn out to be certain families of orthogonal
polynomials which, in addition, are also eigenfunctions of second order operators of some specific
kind. These families are the classical, classical discrete and $q$-classical families of orthogonal polynomials. Besides the orthogonality, they are also common eigenfunctions of a second order differential, difference or $q$-difference operator, respectively.
In the terminology introduced by Duistermaat and Gr\"unbaum \cite{DG} (see also \cite{GrH1}, \cite{GrH3})),  they are examples of the so-called bispectral polynomials, because with these families $(q_n(x))_n$ of polynomials are associated two  operators with respect to which they are eigenfunctions: one acting in the discrete variable $n$ (the three term recurrence relation associated to the orthogonality with respect to a measure in the real line) and the other in the continuous variable $x$.

As an extension of the classical families, more than eighty years ago H.L. Krall raised the issue of orthogonal polynomials which are also common eigenfunctions of a higher order differential operator. He obtained a complete classification for the case of a differential operator of order four (\cite{Kr2}). After his pioneer work, orthogonal polynomials which are also common eigenfunctions of higher order differential operators are usually called Krall polynomials (they are also examples of bispectral polynomials).  Since the eighties a lot of effort has been devoted to find Krall polynomials (\cite{koekoe,koe,koekoe2,L1,L2,GrH1,GrHH,GrY,Plamen1,Plamen2,Zh}, the list is by no mean exhaustive). $q$-Krall polynomials were introduced by Grünbaum and Haine in 1996 \cite{GrH2} (see also \cite{VZ,HI,Pl3,dura}).

The problem of finding Krall discrete polynomials was open for some decades. Richard Askey explicitly posed in 1991 (see page 418 of \cite{BGR}) the problem of finding orthogonal polynomials which are also common eigenfunctions of a higher order difference operator (Krall discrete polynomials) of the form
\begin{equation}\label{hodor}
\sum_{l=s}^rh_l\Sh _l, \quad s\le r, s,r\in \ZZ,
\end{equation}
where $h_l$ are polynomials and $\Sh_l$ stands for the shift operator $\Sh_l(p)=p(x+l)$.

The first examples
of discrete Krall polynomials needed more than twenty years to be constructed: a huge amount of families of Krall discrete orthogonal polynomials were introduced by the author by mean of certain Christoffel transforms of the classical discrete measures of Charlier, Meixner, Krawtchouk and Hahn and dual Hahn (see \cite{du0,du1,DdI,DdI2}). A Christoffel transform is a transformation which consists in multiplying a measure $\mu$ by a polynomial $r$. Families of Krall dual Hahn orthogonal polynomials were introduced in \cite{dudh} also by mean of certain Christoffel transforms of the dual Hahn measure.
In the dual Hahn case, for a real number $v$, we consider the linear space $\PP^{\lambda}$ of polynomials in $\lambda(x)=x(x+v+1)$:
$$
\PP^{\lambda}=\{p(\lambda): p\in \PP\}.
$$
The Krall dual Hahn polynomials as functions of $\lambda (x)$ are eigenfunctions of a higher order difference operator of the form
\begin{equation}\label{hodo}
\sum_{j=s}^rh_j\Sh_{j}^\lambda,
\end{equation}
where $h_j$, $j=s,\ldots,r, s\leq r$, are rational functions and the shift operator $\Sh_j^\lambda$ acts in $\PP^{\lambda }$ and it is defined by
$\Sh_j^\lambda (p(\lambda (x)))=p(\lambda (x+j))$.
We denote by $\A^\lambda$ the algebra formed by all the operators $T$ of the form (\ref{hodo}) which maps $\PP^\lambda$ into itself:
\begin{equation}\label{defal}
\A^\lambda=\{T: \mbox{$T$ is of the form (\ref{hodo}) and $T(\PP^\lambda)\subset \PP^\lambda$}\}.
\end{equation}

The  reciprocal of the Christoffel transform is the Geronimus transform. Given a polynomial $r$ and a measure $\mu$, a Geronimus transform $\nu$ of the measure $\mu$ with respect to the polynomial $r$ satisfies $r\nu=\mu$.
Note that a Geronimus transform of the measure $\mu$ with respect to $r$ is not uniquely defined. Indeed, write $a_i$, $i=1,\ldots , u$, for the different real roots of the polynomial $r$, each one with multiplicity $b_i$, respectively.
It is easy to see that if $\nu$ is a Geronimus transform of $\mu$ then the measures $\nu +\sum_{i=1}^u\sum_{j=0}^{b_i-1}\gamma_{i,j}\delta _{a_i}^{(j)}$ are also  Geronimus transforms of $\mu$ with respect to $r$, where $\gamma_{i,j}$ are real numbers which sometimes are called free parameters of the Geronimus transform ($\delta_{a}^{(j)}$ is the signed measure determined by the sequence of moments $(m_n^j)_n$: $m_n^j=0$, $0\le n\le j-1$ and $m_n^j=n(n-1)\cdots (n-j+1)a^{n-j}$, $n\ge j$). In the literature, Geronimus transform is sometimes called Darboux transform with parameters
(see, for instance, \cite{YZ,Z3}).

Krall measures turn out to be Geronimus transforms of the Laguerre and Jacobi weights when the Laguerre parameter $\alpha$ or one or both of the Jacobi parameters $\alpha,\beta$ are nonnegative integers. As a consequence there are families of Krall measures depending on an arbitrary number of continuous parameters. For example, for positive integers $k$ and $m$ and real parameters $M_i$, $i=0\cdots, k$, the measures
$$
\sum_{i=0}^k M_i\delta_0^i+x^me^{-x}dx,\quad x>0,
$$
are Krall measures (\cite{GrHH}).
In the same way, there are examples of $q$-Krall measures depending on an arbitrary number of continuous parameters (because most of the $q$-Krall measures are also Geronimus transform of  $q$-classical measures). However, that did not seem to be the case of the Krall discrete measures: they are Christoffel transform of discrete measures defined from certain finite sets of positive integers, hence besides the continuous parameters of the associated classical discrete measures, all the Krall discrete measures known up to now only depend on an arbitrary number of discrete parameters. Here it is a typical example: for real numbers $a,b$, $a,b>-1$, a positive integer $N$ and a finite set $F=\{f_i:i=1,\cdots , k\}$ of positive integers, the measures
$$
\prod _{f\in F}(x-\lambda ^{a,b}(f))\rho_{a,b,N},
$$
where $\rho_{a,b,N}$ is the dual Hahn measure (see (\ref{masdh})), are Krall discrete measures. Besides the parameters $a,b,N$ of the dual Hahn measures, we have an arbitrary number of discrete parameters: the elements $f_i$, $i=1,\cdots ,k$, of the finite set $F$ which have to be positive integers.

The purpose of this paper is to introduce some new examples of Krall dual Hahn measures. They depend on an arbitrary number of continuos parameters. Each one of this Krall dual Hahn measure is constructed by applying a certain Christoffel transform to a suitable Geronimus transform of a dual Hahn measure with nonnegative parameters $a$ and $b$.
We guess that there are no Krall Charlier, Meixner, Krawtchouk and Hahn measures depending on continuous parameters other that the continuous parameters of the associated classical discrete measure.

The content of the paper is as follows.

In Section \ref{sec2}, we introduce what we call the \lq\lq basic example\rq\rq. Let $a,b,N$ be positive integers with $1\le b\le a\le N$, and set
$$
\lambda^{a,b}(x)=x(x+a+b+1).
$$
Write $\M=\{M_0,\cdots, M_{b-1}\}$ for a finite set consisting of $b$ real parameters, $M_i\not =0,1$. We then define the discrete measure $\nu_{a,b,N}^\M$ supported in the finite quadratic net
$$
\{\lambda^{a,b}(i):i=-b,\cdots ,N\}
$$
by
\begin{align}\label{lanu}
\nu_{a,b,N}^{\M}=&\sum_{x=-b}^{-1}\frac{(2x+a+b+1)(N+1-x)_{x+b}}{(N+b+1)_{x+a+1}}M_{x+b}\delta_{\lambda^{a,b}(x)}\\\nonumber
&\qquad +\frac{(N+1)_b^2}{(b+1)_{a-b}}\sum_{x=0}^N \frac{\rho_{b,a,N}(x)}{\prod_{i=0}^{b-1}(x+a+i+1)(x+b-i)}\delta _{\lambda^{a,b}(x)},
\end{align}
where $\rho_{b,a,N}$ is the dual Hahn measure (see (\ref{masdh}) below).

It is easy to check that the measure $\nu_{a,b,N}^{\M}$ is positive if and only if the parameters in $\M$ are positive. Notice that
$$
\prod_{i=0}^{b-1}(x+a+i+1)(x+b-i)=\prod_{i=0}^{b-1}(\lambda^{a,b}(x)-\lambda^{a,b}(i-b)),
$$
and hence the measure $\nu_{a,b,N}^{\M}$ is a Geronimus transform of the dual Hahn measure $\rho_{b,a,N}$ associated to the polynomial $\prod_{i=0}^{b-1}(x-\lambda^{a,b}(i-b))$:
$$
\prod_{i=0}^{b-1}(x-\lambda^{a,b}(i-b))\nu_{a,b,N}^{\M}=\frac{(N+1)_b^2}{(b+1)_{a-b}}\rho_{b,a,N}.
$$
We find necessary and sufficient conditions for the existence of orthogonal polynomials with respect to the measure $\nu_{a,b,N}^{\M}$ and
construct them explicitly when there exist. We also prove that they are eigenfunctions of a higher order difference operator of the form (\ref{hodo}). For the convenience of the reader we state here the result in full. To do that, we need to introduce some auxiliary functions. As usual, $\lceil x\rceil$ denotes the ceiling function: $\lceil x\rceil=\min \{n\in \ZZ:n\ge x\}$, and $(x)_m$, $m\in \NN$, denotes the Pochhammer symbol $(x)_m=x(x+1)\cdots (x+m-1)$; we also set $(x,y)_m=(x)_m(y)_m$.
For $u\in \NN$, we define
$$
\varphi_{u}^{a,b,N}(s,x)=(-a+1,-N)_{\max(u,a+b-u-1)}\pFq{3}{2}{-u,u-s-a-b+1,-x}{-a-s+1,-N}{1}.
$$
Since $u\in \NN$, except for  normalization, $\varphi_{u}^{a,b,N}(s,x)$ is the Hahn polynomial $h_u^{-a-s,-b,N}(x)$ (\ref{hpol}). Hence as a function of $x$ $\varphi_{u}^{a,b,N}(s,x)$ is a polynomial of degree at most $u$, and as a function of $s$ it is rational and analytic at $s=0$ when $u\le a-1$.

We next define the sequence $(W_g^{a,b,N;\M})_g$ of polynomials, $W_g^{a,b,N;\M}$ of degree $g$, as follows
\begin{equation}\label{losw}
\begin{cases} \frac{\partial}{\partial s}\varphi_{g}^{a,b,-2-N}(0,x)-\frac{\partial}{\partial s}\varphi_{a+b-g-1}^{a,b,-2-N}(0,x) ,& \lceil\frac{a+b}{2}\rceil \le g\le a-1,\vspace{.2cm}\\
 (-1)^{b+g}(g-b)!&\\
\hspace{.2cm}\times \left[(b+a-g-1)!(-x)_ah_{g-a}^{a,-b,-2-N-a}(x-a)\right.&\\
\hspace{.4cm} \left.+\frac{(g-a)!(N+a+b+1-g)_{2g-a-b+1}}{M_{g-a}-1}h_{a+b-g-1}^{-a,-b,-2-N}(x)\right],& a\le g\le a+b-1,\vspace{.2cm}\\
 h_g^{-a,-b,-2-N}(x), & \mbox{otherwise},
 \end{cases}
\end{equation}
where $h_n^{a,b,N}$ denotes the $n$-th Hahn polynomial (see (\ref{hpol}) below). Notice that only the polynomial $W_{i+a}^{a,b,N;\M}$ depends on the parameter $M_i$, $i=0,\cdots , b-1$.
In Section \ref{sec1}, we explain the way in which we have found both, the measure $\nu_{a,b,N}^{\M}$ and the auxiliary polynomials $(W_g^{a,b,N;\M})_g$.

We finally define the sequence $(\Phi_n^{a,b,N;\M})_n$ by
\begin{equation}\label{defphi}
\Phi_n^{a,b,N;\M}=
\left|
  \begin{array}{@{}c@{}lccc@{}c@{}}
  &  &&\hspace{-.9cm}{}_{1\le j\le a} \\
    \dosfilas{W_{g}^{a,b,N;\M}(-n+j-1) }{g\in \{b,b+1,\cdots, a+b-1\}}
  \end{array}\hspace{-.3cm}\right| .
\end{equation}
Throughout  this paper, we use the following notation:
given a finite set of numbers $F=\{f_1,\ldots , f_{n_F}\}$ (we denote by $n_X$ the number of elements of the finite set $X$),  the expression
\begin{equation}\label{defdosf}
  \begin{array}{@{}c@{}lccc@{}c@{}}
  &  &&\hspace{-.9cm}{}_{1\le j\le n_F} \\
    \dosfilas{ z_{f,j}  }{f\in F}
  \end{array}
\end{equation}
inside  of a matrix or a determinant will mean the submatrix defined by
$$
\left(
\begin{array}{cccc}
z_{f_1,1} & z_{f_1,2} &\cdots  & z_{f_1,n_F}\\
\vdots &\vdots &\ddots &\vdots \\
z_{f_{n_F},1} & z_{f_{n_F},2} &\cdots  & z_{f_{n_F},n_F}
\end{array}
\right) .
$$
The determinant (\ref{defphi})  should be understood in this form.

\begin{theorem}\label{th1} Let $a,b,N$ be nonnegative integers with $1\le b\le a\le N$, and write $\M=\{M_0,\cdots, M_{b-1}\}$ for a finite set consisting of $b$ real parameters, $M_i\not =0,1$. Then the measure $\nu_{a,b,N}^{\M}$ has a sequence of orthogonal polynomials if and only if
\begin{equation}\label{cnys}
\Phi_n^{a,b,N;\M}\not =0,\quad n=0,\cdots, N+b+1.
\end{equation}
In that case the sequence of polynomials $(q_n^{a,b,N;\M})_{n=0}^{N+b}$ defined by
\begin{equation}\label{qusmei2}
q_n^{a,b,N;\M}(x)=
\left|
  \begin{array}{@{}c@{}lccc@{}c@{}}
  & \frac{(-1)^{j-1}}{(b+N-n+j)_{a+1-j}}R_{n-j+1}^{a,b,N}(x) &&\hspace{-.6cm}{}_{1\le j\le a+1} \\
    \dosfilas{W_{g}^{a,b,N;\M}(-n+j-2) }{g\in \{b,b+1,\cdots, a+b-1\}}
  \end{array}\hspace{-.3cm}\right| ,
\end{equation}
is orthogonal  with respect to the measure $\nu_{a,b,N}^{\M}$, with norm
\begin{equation}\label{normq}
\langle q_n^{a,b,N;\M},q_n^{a,b,N;\M}\rangle_{\nu_{a,b,N}^{\M}}=\frac{(N+b)!^2\Phi_n^{a,b,N;\M}\Phi_{n+1}^{a,b,N;\M}}{(N+a-n)!(N+b-n)!(N+b-n+1)_a^2}.
\end{equation}
Moreover, the polynomials $q_n^{a,b,N;\M}(\lambda^{a,b}(x))$, $n\ge 0$, are also eigenfunctions of a higher order difference operator of the form (\ref{hodo}) with $-s=r=ab+1$.
\end{theorem}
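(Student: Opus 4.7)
My plan is to organize the argument into three blocks: first, show that the determinantal formula (\ref{qusmei2}) defines a polynomial of degree exactly $n$ if and only if $\Phi_n^{a,b,N;\M}\neq0$, which yields the existence condition (\ref{cnys}); second, verify orthogonality against $\nu_{a,b,N}^{\M}$ and compute the norm (\ref{normq}); third, construct the bispectral difference operator of order $2(ab+1)$.

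For the first block, I would expand the $(a+1)\times(a+1)$ determinant in (\ref{qusmei2}) along its first row. Each cofactor is an $a\times a$ determinant built from values $W_g^{a,b,N;\M}(-n+j'-2)$ for $j'\in\{1,\dots,a+1\}\setminus\{j\}$ and $g\in\{b,\dots,a+b-1\}$. Since $\deg R_{n-j+1}^{a,b,N}=n-j+1$, the leading coefficient of $q_n^{a,b,N;\M}$ in $\lambda$ comes entirely from the $j=1$ term; its cofactor has arguments $-n,-n+1,\dots,-n+a-1$, which matches (\ref{defphi}) exactly, so the leading coefficient equals $\Phi_n^{a,b,N;\M}$ up to a nonzero scalar. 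Hence $\deg q_n^{a,b,N;\M}=n$ iff $\Phi_n^{a,b,N;\M}\neq0$.

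For the second block, I would verify $\langle q_n^{a,b,N;\M},p\rangle_{\nu_{a,b,N}^{\M}}=0$ for every polynomial $p$ of degree $<n$. Splitting $\nu_{a,b,N}^{\M}$ into the Geronimus piece on $x\in\{0,\dots,N\}$ and the Dirac masses at $\lambda^{a,b}(-b),\dots,\lambda^{a,b}(-1)$, I would decompose $q_n^{a,b,N;\M}p/\prod_{i=0}^{b-1}(\lambda-\lambda^{a,b}(i-b))$ by partial fractions. The polynomial part integrates against $\rho_{b,a,N}$ and vanishes by orthogonality of the dual Hahn polynomials, since $q_n^{a,b,N;\M}$ is an explicit combination of $R_{n-a}^{a,b,N},\dots,R_n^{a,b,N}$. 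The residues at the Geronimus points $\lambda^{a,b}(i-b)$ must cancel the contribution of the free-parameter masses; this cancellation is enforced precisely by the middle branch of (\ref{losw}) for $a\le g\le a+b-1$, whose factor $1/(M_{g-a}-1)$ is engineered to match the Dirac contribution of $M_{g-a}\delta_{\lambda^{a,b}(g-a-b)}$. Applying the same computation with $p=q_n^{a,b,N;\M}$ yields the norm formula, with the numerator $\Phi_n^{a,b,N;\M}\Phi_{n+1}^{a,b,N;\M}$ arising from the product of the leading coefficient $\Phi_n^{a,b,N;\M}$ and the pairing $\langle q_n^{a,b,N;\M},\lambda^n\rangle_{\nu_{a,b,N}^{\M}}$, which is proportional to $\Phi_{n+1}^{a,b,N;\M}$ by a shift-of-index cofactor argument. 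The extra condition $\Phi_{N+b+1}^{a,b,N;\M}\neq0$ is then needed only to guarantee that the top norm $\|q_{N+b}^{a,b,N;\M}\|^2$ is nonzero.

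For the third block, I would follow the strategy from the author's earlier work \cite{du0,du1,dudh} and construct the higher-order operator inside the algebra $\A^\lambda$ rather than verify the eigenvalue equation by brute force. The idea is to identify operators $D\in\A^\lambda$ whose eigenvalues on the basis $(R_n^{a,b,N})_n$ are polynomials in $n$ with prescribed zero sets, then combine such operators so that the resulting $D$ preserves the finite-dimensional subspace spanned in the variable $n$ by the rows of (\ref{qusmei2}). The order $2(ab+1)$ is consistent with a product structure combining $a$ Christoffel-type factors with $b$ Geronimus-type factors, plus the underlying second-order dual Hahn operator. The \textbf{main obstacle} will be precisely this bispectral block: while the Duistermaat--Gr\"unbaum symmetry between the $x$- and $n$-variables guarantees in principle that such an operator exists, producing it explicitly and checking $Dq_n^{a,b,N;\M}=\gamma_n q_n^{a,b,N;\M}$ uniformly in $n$ requires careful combinatorial control of how the free parameters $M_i$ propagate through the commutation relations in $\A^\lambda$ and of the interaction between the three branches of the definition (\ref{losw}).
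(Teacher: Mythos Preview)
Your first block is fine and matches the paper. The other two blocks, however, diverge from the paper's argument in ways that create real difficulties.

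\medskip
\textbf{Orthogonality.} The paper does not split $\nu_{a,b,N}^{\M}$ into a Geronimus piece plus Diracs and then run a partial-fraction cancellation. Instead, the entire orthogonality argument rests on a single key lemma (Lemma~\ref{lem2}) giving closed identities of the form
\[
\langle R_s^{a,b,N},x^m\rangle_{\nu_{a,b,N}^{\M}}
=c_s\sum_{g=b}^{a+b-1}\psi_g^m\,W_g^{a,b,N;\M}(-s-1),
\]
valid for all $s$ in the relevant range. Once this is in hand, the inner product $\langle q_n,x^m\rangle$ becomes the original determinant with its first row replaced by a linear combination of the remaining $a$ rows, and hence vanishes; the norm formula and the inductive proof of the converse direction (including $\Phi_0\neq 0$) follow from the same identities. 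Your partial-fraction sketch has a concrete gap: after dividing out $\prod_{i=0}^{b-1}(\lambda-\lambda^{a,b}(i-b))$ the polynomial part must be paired with $\rho_{b,a,N}$, but the building blocks $R_{n-j+1}^{a,b,N}$ are orthogonal for $\rho_{a,b,N}$, not $\rho_{b,a,N}$, so ``vanishes by orthogonality of the dual Hahn polynomials'' is not immediate. Moreover, your cancellation argument addresses only the branch $a\le g\le a+b-1$ of (\ref{losw}); the branch $\lceil(a+b)/2\rceil\le g\le a-1$ involves no mass parameter $M_i$ at all and arises from the collapse of Hahn polynomials rather than from Dirac contributions, so it does not fit the picture you describe.

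\medskip
\textbf{Bispectrality.} You have inverted the difficulty. In the paper this is the \emph{easy} step, not the main obstacle: the determinantal expression (\ref{qusmei2}) is already of the abstract form
\[
\left|
\begin{array}{c}
(-1)^{j-1}p_{n-j+1}(x)/\xi_{n-j+1,m-j+1}\\
Y_i(n-j+1)
\end{array}
\right|
\]
required by the $\D$-operator theorem of \cite{dudh} (Theorem~3.1 there), with $p_n=R_n^{a,b,N}(\lambda^{a,b}(x))$, $\epsilon_n=b+N-n+1$, $\xi_{n,i}=(b+N-n+1)_i$, and $Y_i=W_g^{a,b,N;\M}$. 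That theorem applies for \emph{arbitrary} polynomials $Y_i$ in $n$, so the continuous parameters $M_i$ require no separate bookkeeping; they are absorbed into the $Y_i$'s and never interact with the commutation relations in $\A^\lambda$. The operator and its order $-s=r=ab+1$ come directly from the cited result.
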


In particular, if the measure $\nu_{a,b,N}^{\M}$ is positive (that is, all the parameters $M_i$, $i=0,\cdots b-1$, are positive) then the assumption (\ref{defphi}) holds and we can construct orthogonal polynomials with respect to $\nu_{a,b,N}^{\M}$ by using (\ref{qusmei2}).

Given a finite set of complex numbers $U$ such that $u+v\not =-a-b-1$, $u,v\in U$, we consider in Section \ref{sec3} the Christoffel transform of the measure $\nu_{a,b,N}^{\M}$ defined by
\begin{equation}\label{laxm}
\nu_{a,b,N}^{\M,U}=\prod_{u\in U}(x-\lambda^{a,b}(u))\nu_{a,b,N}^{\M}.
\end{equation}
We construct orthogonal polynomials $(q_n^{a,b,N;\M,U})_n$ with respect to $\nu_{a,b,N}^{\M,U}$ by mean of the formula (compared with (\ref{qusmei2}))
\begin{align}\label{qusmei3}
&q_n^{a,b,N;\M,U}(x)\\\nonumber
&\hspace{.0cm}=
\frac{\left|
  \begin{array}{@{}c@{}lccc@{}c@{}}
  & (-1)^{j-1}R_{n+n_U-j+1}^{a,b,N}(x) &&\hspace{-2.6cm}{}_{1\le j\le a+n_U+1} \\
    \dosfilas{(b+N-n-n_U+j)_{a+n_U+1-j}W_{g}^{a,b,N;\M}(-n-n_U+j-2) }{g\in \{b,b+1,\cdots, a+b-1\}}\\
    \dosfilas{(-1)^{j-1}R_{n+n_U-j+1}^{a,b,N}(\lambda^{a,b}(u))}{u\in U}
  \end{array}\hspace{-.7cm}\right|}{\prod_{u\in U}(x-\lambda^{a,b}(u))}
\end{align}
(the renormalization is necessary to avoid division by zero).

The more interesting cases of the Christoffel transforms (\ref{laxm}) is when
$$
\lambda^{a,b}(u)\in \{\lambda^{a,b}(i):\mbox{$-b\le i\le -2$ or $0\le i\le N$}\},\quad u\in U,
$$
because they provide new examples of Krall discrete measures. We prove it in Section \ref{sec4} and to do that we find other determinantal formula for the orthogonal polynomials $(q_n^{a,b,N;\M,U})_n$ different to (\ref{qusmei3}).

This property of having different and nontrivial determinantal representations is typical of the Christoffel transform
of classical discrete measures (see \cite{ductdm}). By nontrivial, we mean that one of such representations
can not be transformed in other different representation just by combining rows
and columns in the corresponding determinants; in particular, the determinants
corresponding to two different representations can have rather different sizes.
In Section \ref{sec5} we find other different determinantal representations for the families of the orthogonal polynomials
with respect to all the Christoffel transforms  (\ref{laxm}).

The new examples of Krall dual Hahn orthogonal polynomials are also interesting by the following reason. It has been shown in \cite{duha} that exceptional Hahn polynomials can be constructed by applying duality (in the sense of \cite{Leo}) to Krall dual Hahn orthogonal polynomials. Passing then to the limit, exceptional Jacobi polynomials can be constructed.
Exceptional and exceptional discrete orthogonal polynomials $p_n$, $n\in X\varsubsetneq \NN$, are complete orthogonal polynomial systems with respect to a positive measure which in addition are eigenfunctions of a second order differential or difference operator, respectively. They extend the  classical families of Hermite, Laguerre and Jacobi or the classical discrete families of Charlier, Meixner and Hahn. The exceptional families have gaps in their degrees, in the
sense that not all degrees are present in the sequence of polynomials, being that the most apparent difference with their classical counterparts.
The last thirteen years have seen a great deal of activity in the area  of exceptional orthogonal polynomials (see, for instance,
\cite{BK,duch,dume,duha,GFGM,GUKM1,GUKM2} (where the adjective \textrm{exceptional} for this topic was introduced),  \cite{GQ,OS3,OS4,STZ}, and the references therein).

In all the examples appeared before 2021 apart from the parameters associated to
the classical and classical discrete weights, only discrete parameters appear in the construction of each exceptional family.
Although very recently, M.A. Garc\'\i a Ferrero, D. G\'omez-Ullate and R. Milson have introduced in \cite{xle} exceptional Legendre polynomials depending on an arbitrary number of continuous parameters. In a subsequent paper, we construct new examples of exceptional Hahn and  Jacobi polynomials using the new examples of Krall dual Hahn polynomials introduced in this paper. These exceptional Hahn and Jacobi polynomials will depend on an arbitrary number of continuous parameters and include as particular cases the exceptional Legendre polynomials in \cite{xle}.

We finish pointing out that the inclusion of the continuous parameters has needed of some new ideas if we compare with some previous papers (especially \cite{DdI} and \cite{dudh}). Anyway, we have omitted those proofs which are similar to some results in \cite{DdI} or \cite{dudh}.

\section{Preliminaries}
Let $\mu$ be a measure (positive or not) with finite moments $\int x^nd\mu$, $n=0,\cdots, 2K$ ($K$ a positive integer or infinity). We say that a sequence of polynomials $(p_n)_{n=0}^K$, $p_n$ of degree $n$, is orthogonal with respect to $\mu$, if
$$
\int p_np_md\mu \begin{cases} =0,&n\not =m,\\ \not =0, &n=m.\end{cases}
$$
For a discrete measure $\rho=\sum_{x=m}^n a_x\delta _{\lambda (x)}$ and $u\in \NN$, we denote by $\tau_u\rho $ the translated measure
\begin{equation}\label{mtr}
\tau _u \rho=\sum_{x=m-u}^{n-u} a_{x+u}\delta_\lambda(x).
\end{equation}
Similarly, given a polynomial $r$, the measure $r\rho$ is defined by
\begin{equation}\label{ctr}
r \rho=\sum_{x=m}^{n} r(\lambda(x)) a_{x}\delta_\lambda(x).
\end{equation}
\bigskip

Consider the set $\Upsilon$  formed by all finite sets of positive
integers:
\begin{equation*}
\Upsilon=\{F:\mbox{$F$ is a finite set of positive integers}\} .
\end{equation*}
We consider the involution $I$ in $\Upsilon$ defined by
\begin{align}\label{dinv}
I(F)=\{1,2,\cdots, \max F\}\setminus \{\max F-f,f\in F\}.
\end{align}
For $F=\emptyset$, we define $\max F=\min F=-1$, and so $I(\emptyset)=\emptyset$.

The definition of $I$ implies that $I^2=Id$.

The set $I(F)$ will be denoted by $G$: $G=I(F)$. Notice that
\begin{equation}\label{spmu}
\max F=\max G,\quad n_G=\max F-n_F+1,
\end{equation}
where $n_F$ and $n_G$ are the number of elements of $F$ and $G$,
respectively.

\bigskip

Given a finite set of numbers $F=\{f_1,\cdots, f_{n_F}\}$, $f_i<f_j$ if $i<j$, we denote by $V_F$ the Vandermonde determinant defined by
\begin{equation}\label{defvdm}
V_F=\prod_{1=i<j=n_F}(f_j-f_i).
\end{equation}
\subsection{Dual Hahn and Hahn polynomials}
We include here basic definitions and facts about dual Hahn and Hahn polynomials, which we will need in the following Sections.

For $a $ and $b$ real numbers, we write
\begin{equation}\label{deflamb}
\lambda^{a,b}(x)=x(x+a+b+1).
\end{equation}

We write $(R_{n}^{a,b,N})_n$ for the sequence of dual Hahn polynomials defined by
\begin{equation}\label{dhpol}
R_{n}^{a,b,N}(x)=\sum _{j=0}^n\frac{(-n)_j(-N+j)_{n-j}(a+j+1)_{n-j}}{n!(-1)^j j!}\prod_{i=0}^{j-1}(x-i(a+b+1+i))
\end{equation}
(see \cite{KLS}, pp, 209-13).
We have taken a different normalization that in \cite{dudh} since
we are going to deal here with the case when $a$ is a negative integer.

Notice that $R_{n}^{a,b,N}$ is always a polynomial of degree $n$.
Using that
$$
(-1)^j\prod_{i=0}^{j-1}(\lambda^{a,b}(x)-i(a+b+1+i))=(-x)_j(x+a+b+1)_j,
$$
we get the hypergeometric representation
$$
R_{n}^{a,b,N}(\lambda^{a,b}(x))=\frac{(a+1)_n(-N)_n}{n!}\pFq{3}{2}{-n,-x,x+a+b+1}{a+1,-N}{1}.
$$
When $a$ and $b$ are positive integers,
the following identity holds for $a\le n$
\begin{equation}\label{dhpn}
\frac{R_{n}^{-a,-b,N}(x+a+b)}{\prod_{i=b}^{a+b-1}(x+a+b-\lambda^{-a,-b}(i))}=
\frac{(n-a)!}{n!}R_{n-a}^{a,b,N-a-b}(x).
\end{equation}
When $N$ is a positive integer and $a ,b \not =-1,-2,\cdots -N $, $a+b \not=-1,\cdots, -2N-1$, the dual Hahn polynomials $R_n^{a,b,N}$, $n=0,\cdots , N$, are
orthogonal with respect to the following measure
\begin{align}\label{masdh}
\rho_{a,b,N}&=\sum _{x=0}^N \frac{(2x+a+b+1)(a+1)_x(-N)_xN!}{(-1)^x(x+a+b+1)_{N+1}(b+1)_xx!}\delta_{\lambda^{a,b}(x)},
\\\label{normedh}
\langle R_n^{a,b,N},R_n^{a,b,N}\rangle &=\frac{(-N)_n^2\binom{a+n}{n}}{\binom{b+N-n}{N-n}},\quad  n=0,\cdots , N.
\end{align}
The measure $\rho_{a,b,N}$ is positive or negative only when either $-1<a,b$ or $a,b<-N$, respectively.

If $N$ is not a nonnegative integer and $a,-b-N-1\not =-1,-2,\cdots$, the dual Hahn polynomials $(R_n^{a,b,N})_n$ are always orthogonal with respect to a signed measure.

We write $(h_{n}^{a,b,N})_n$ for the sequence of Hahn polynomials defined by
\begin{align}\label{hpol}
h_{n}^{a,b,N}(x)&=(a+1)_n(-N)_n\pFq{3}{2}{-n,-x,x+a+b+1}{a+1,-N}{1}\\\nonumber &=\sum _{j=0}^n\frac{(-n)_j(a+b+n+1)_j(-N+j)_{n-j}(a+j+1)_{n-j}(-x)_j}{j!}.
\end{align}
We have taken a different normalization that in \cite{dudh} since
we are going to deal here with the case when $a$ is a negative integer (see \cite{KLS}, 204-8).

The hypergeometric representation of the Hahn and dual Hahn polynomials
show the duality, $n,m\ge 0$
\begin{equation}\label{sdm2b}
\frac{(a+1)_n(-N)_n}{n!} h_{m}^{a,b,N}(n)=(a+1)_m(-N)_m R_{n}^{a,b,N}(\lambda^{a,b}(m)).
\end{equation}
Hahn polynomials are eigenfunctions of the  second order difference operator
\begin{equation}\label{defbc}
\Gamma=a(x)\Sh_{1}-(a(x)+b(x))\Sh_{0}+b(x)\Sh_{-1},\quad \Gamma(h_n^{a,b,N}(x))=\lambda^{a,b}(n)h_n^{a,b,N}(x),
\end{equation}
where
\begin{align*}
a(x)&=(x+a+1)(x-N),\\
b(x)&=x(x-b-N-1).
\end{align*}
Hahn polynomials also satisfies the following identity.
\begin{equation}\label{hcp}
(-1)^nh_{n}^{a,b,N}(x)=h_{n}^{b,a,N}(N-x).
\end{equation}

\section{Finding the pieces of the puzzle}\label{sec1}
In this Section, we explain the way in which we have found both, the measure $\nu_{a,b,N}^{\M}$ (\ref{lanu}) and the auxiliary polynomials $(W_g^{a,b,N;\M})_g$ (\ref{losw}).

In \cite{dudh}, we construct families of Krall dual Hahn polynomials by using Christoffel transforms of the dual Hahn measure. Our starting point here is the following particular case of these families. Let $F$ be a finite set of positive integers. For real numbers $a,b$ and a positive integer $N$
write
\begin{equation}\label{hats}
\hat a=a-\max F -1, \quad \hat b=b-\max F-1,\quad \hat N=N+\max F+1,
\end{equation}
and assume that either $a,b\not \in \NN$, or $a,b\in \NN$, $a,b\le N$  and  $\hat a,\hat b \not \in \{-1,-2,\cdots \}$. Consider the Christoffel transform $\rho_{a,b,N}^F$ of the dual Hahn measure $\rho_{a,b,N}$ (\ref{masdh}) defined by
\begin{equation}\label{mqs}
\rho_{a,b,N}^F=\tau_{\max F+1}\left(\prod _{f\in F}(x-\lambda ^{\hat a,\hat b}(f))\rho_{\hat a,\hat b, \hat N}\right)
\end{equation}
(see (\ref{mtr}) and (\ref{ctr})). We have modified the expression in \cite{dudh} taking into account that
$$
\lambda^{a,b}(x)-\lambda^{a,b}(-\max F-1+f)=\lambda^{\hat a,\hat b}(x+\max F+1)-\lambda^{\hat a,\hat b}(f).
$$
Under the assumption that for $0\le n\le N+n_G+1$
\begin{equation}\label{defphiv}
\Phi_n^{a,b,N;F}(x)=
\left|
  \begin{array}{@{}c@{}lccc@{}c@{}}
  &  &&\hspace{-1.4cm}{}_{1\le j\le n_G} \\
    \dosfilas{h_{g}^{-a,-b,-2-N}(-n+j-1) }{g\in G}
  \end{array}\hspace{-.3cm}\right| \not =0,
\end{equation}
where $G=I(F)$, and $I$ is the involution defined in (\ref{dinv}), we construct in \cite{dudh} orthogonal polynomials with respect to $\rho_{a,b,N}^F$ using the determinantal formula
\begin{equation}\label{qusmeix}
q_n^{a,b,N;F}(x)=
\left|
  \begin{array}{@{}c@{}lccc@{}c@{}}
  & \frac{(-1)^{j-1}}{(b+N-n+j)_{n_G+1-j}}R_{n-j+1}^{a,b,N}(x) &&\hspace{-.6cm}{}_{1\le j\le n_G+1} \\
    \dosfilas{h_{g}^{-a,-b,-2-N}(-n+j-2) }{g\in G}
  \end{array}\hspace{-.3cm}\right| .
\end{equation}

The assumption (\ref{defphiv}) is obviously equivalent to say that the polynomial $q_n^{a,b,N;F}$ has degree $n$.

The most interesting case is when $\rho_{a,b,N}^F$ is a positive measure, in which case the hypothesis (\ref{defphiv}) holds.

The case when $\hat a,\hat b\in \{-1,-2,\cdots \}$ was not considered in \cite{dudh} because the dual Hahn measure
$\rho_{\hat a,\hat b, \hat N}$ is not well defined. Indeed, since the mass at $\lambda^{\hat a,\hat b}(x)$ of the measure $\rho_{\hat a,\hat b, \hat N}$ is given by (see (\ref{masdh}))
$$
\frac{(2x+\hat a+\hat b+1)(\hat a+1)_x(-\hat N)_x\hat N!}{(-1)^x(x+\hat a+\hat b+1)_{\hat N+1}(\hat b+1)_xx!},
$$
the Pochhammer symbols $(\hat b+1)_x$ and $(x+\hat a+\hat b+1)_{N+1}$ in the denominator of $\rho_{\hat a,\hat b, \hat N}(x)$  vanishes for
$x\ge -\hat b$ and $x=0,\cdots , -\hat a-\hat b-1$, respectively. And, for $N$ big enough, all the points $\lambda^{a,b}(x)$, $x\ge -\hat b$ and $x=0,\cdots , -\hat a-\hat b-1$ are in the support of $\rho_{\hat a,\hat b, \hat N}$.
Surprisingly enough, for certain sets $F$, the measure (\ref{mqs}) makes sense even when $\hat a,\hat b\in \{-1,-2,\cdots \}$.

Indeed, assume that $a,b\in \NN$ with $1\le b\le a$. For real numbers $s,M>0$, with $0<s<\max\{1,\vert M\vert \}$, we define
\begin{equation}\label{losas}
a_s=a-s/M,\quad b_s=b+s,
\end{equation}
so that $a_s,b_s\not \in \ZZ$, $\lim_{s\to 0} a_s=a$, $\lim_{s\to 0} b_s=b$ and $\lim_{s\to 0}\frac{a_s-a}{b_s-b}=-1/M$.
When $F_0=\{a,a+1,\cdots, a+b-1\}$, it is not difficult to prove by a careful computation that when $s\to 0$ the measures $\rho_{a_s,b_s,N}^F$ (\ref{mqs})
converges to the positive measure
\begin{align*}
\mu=&c_{a,b}\sum_{x=-b}^{-1}\frac{(2x+a+b+1)(N+1-x)_{x+b}}{(N+b+1)_{x+a+1}}\delta_{\lambda^{a,b}(x)}\\
&\qquad +\frac{c_{a,b}(N+1)_b^2}{M(b+1)_{a-b}}\sum_{x=0}^N \frac{\rho_{b,a,N}(x)}{\prod_{i=0}^{b-1}(x+a+i+1)(x+b-i)}\delta _{\lambda^{a,b}(x)},
\end{align*}
where
$$
c_{a,b}=\frac{(-1)^{a+b+1}(b-1)!(N+b+1)_{a}^2}{(a-1)!},
$$
and $\rho_{b,a,N}(x)$ is the mass at $\lambda^{a,b}(x)$ of the dual Hahn measure $\rho_{b,a,N}$ (see \ref{masdh})). Note that since $\lambda ^{a,b} (x)=\lambda^{a,b}(-x-a-b-1)$, we can move in the measure $\mu$ the mass at the point $x$ to the point $-x-a-b-1$).

Comparing with (\ref{lanu}), we see that the limit measure $\mu$ is the particular case of the measure $\frac{M}{c_{a,b}}\nu_{a,b,N}^\M$, when all the parameters in $\M$ are equal to $M$. This is the way we have found the measure $\nu_{a,b,N}^\M$ (the first piece of the puzzle).

When $1\le b\le a$, it is not difficult to see that $F_0=\{a,a+1,\cdots, a+b-1\}$ is the minimal set having the property that the measures $\rho_{a_s,b_s,N}^F$ have a limit as $s\to 0$ even though the dual Hahn measure  $\rho_{\hat a,\hat b, \hat N}$ is not well defined.

When all the parameters in $\M$ are equal, orthogonal polynomials with respect to the measure $\nu_{a,b,N}^\M$ can be constructed by taking limits in (\ref{qusmeix}). Indeed, since $G_0=I(F_0)=\{b,b+1,\cdots, a+b-1\}$, we have that the polynomials
\begin{equation}\label{qusmeiv}
q_n^{a_s,b_s,N;M}(x)=
\left|
  \begin{array}{@{}c@{}lccc@{}c@{}}
  & \frac{(-1)^{j-1}}{(b_s+N-n+j)_{m+1-j}}R_{n-j+1}^{a_s,b_s,N}(x) &&\hspace{-.6cm}{}_{1\le j\le a+1} \\
    \dosfilas{h_{g}^{-a+s/M,-b-s,-2-N}(-n+j-2) }{g\in \{b,b+1,\cdots, a+b-1\}}
  \end{array}\right| ,
\end{equation}
are orthogonal with respect to the measure  $\rho_{a_s,b_s,N}^{F_0}$ (assuming (\ref{defphiv})).

However, we have to be very careful when taking limit in (\ref{qusmeiv}). Otherwise, by a direct passing to  the limit we would get the polynomials
\begin{equation}\label{qusmeiv2}
\left|
  \begin{array}{@{}c@{}lccc@{}c@{}}
  & \frac{(-1)^{j-1}}{(b+N-n+j)_{m+1-j}}R_{n-j+1}^{a,b,N}(x) &&\hspace{-.6cm}{}_{1\le j\le a+1} \\
    \dosfilas{h_{g}^{-a,-b,-2-N}(-n+j-2) }{g\in \{b,b+1,\cdots, a+b-1\}}
  \end{array}\right| .
\end{equation}
Under the hypotheses $a,b\not \in \NN$ or $a,b\in \NN$ and $a,b\ge \max F+1$, each Hahn polynomial $h_{g}^{-a,-b,-2-N}$, $g\in G$, in (\ref{qusmeix}) has degree $g$, and (\ref{qusmeix}) provides orthogonal polynomials with respect to the measure $\rho_{a,b,N}^{F}$ (forget for a while the assumption (\ref{defphiv})). But in our case, $F_0=\{a,a+1,\cdots, a+b-1\}$, $a, b\le \max F_0$ and then some of the Hahn polynomials $h_{g}^{-a,-b,-2-N}$, $g=b,b+1,\cdots ,a+b-1$, collapse to a polynomial of smaller degree or even to zero, with the consequence that some of the polynomials in (\ref{qusmeiv2}) can also collapse to zero.
In order to avoid that problem, we proceed as follows. Consider the sets $G_0=I(F_0)=\{b,\cdots, a+b-1\}$ and
$$
G_1=\left\{ \left\lceil\frac{a+b}{2}\right\rceil,\cdots, a-1\right\}\subset G_0, \quad G_2=\{a,\cdots, a+b-1\}\subset G_0
$$
(let us remain that $1\le b\le a$). If $g\not \in G_1\cup G_2$, the Hahn polynomial $h_{g}^{-a,-b,-2-N}$ has degree $g$ and the corresponding row in (\ref{qusmeiv2}) will not produce any problem.

If $g\in G_2$, then $h_{g}^{-a,-b,-2-N}=0$ and the corresponding row in (\ref{qusmeiv2}) collapse to zero. We avoid this by using the
polynomial
\begin{equation}\label{lim1}
\lim_{s\to 0}\frac{1}{s}h_{g}^{-a+s/M,-b+s,-2-N}(x).
\end{equation}
It is easy to see that this is a polynomial of degree $g$. More precisely, a careful computation shows that, except for the multiplicative constant
$M/(M-1)$, the limit above coincides with the combination of two Hahn polynomials in the identity (\ref{losw}).
Note that, in (\ref{losw}) we have taken an arbitrary parameter $M_{g-a}$ for each $g$, $a\le g\le a+b-1$. Hence, we conclude
\begin{equation}\label{lim1b}
W_g^{a,b,N;\M}(x)=\frac{M_{g-a}}{M_{g-a}-1}\lim_{s\to 0}\frac{1}{s}h_{g}^{-a+s/M,-b+s,-2-N}(x).
\end{equation}

If $g\in G_1$, only the powers $x^j$, $j=-g+a+b,\cdots, g$, of $h_{g}^{-a,-b,-2-N}$ vanish. Moreover, it is not difficult to see that then
$$
h_{g}^{-a,-b,-2-N}(x)=\frac{h_{g}^{-a,-b,-2-N}(0)}{h_{-g+a+b-1}^{-a,-b,-2-N}(0)}h_{-g+a+b-1}^{-a,-b,-2-N}(x).
$$
Since $g\in G_1$ if and only if $-g+a+b-1\in \{b,\cdots , \lceil\frac{a+b}{2}\rceil-1\}\subset G_0$,
the $(g+1)$-th and $(-g+a+b)$-th rows in (\ref{qusmeiv2}) are proportional and hence the determinant will be zero. We avoid this by changing the polynomial
$h_{g}^{-a+s/M,-b-s,-2-N}$, $g\in G_1$, in the $(g+1)$-th row of the determinant (\ref{qusmeiv}) by the polynomial
$$
h_{g}^{-a+s/M,-b-s,-2-N}-\frac{h_{g}^{-a+s/M,-b-s,-2-N}(0)}{h_{-g+a+b-1}^{-a+s/M,-b-s,-2-N}(0)}h_{-g+a+b-1}^{-a+s/M,-b-s,-2-N}.
$$
Since the polynomial $h_{-g-a-b-1}^{-a+s/M,-b-s,-2-N}$ defines the $(-g-a-b)$-th row of the determinant (\ref{qusmeiv}), the polynomial $q_n^{a_s,b_s,N;M}$ remains the same. Hence, for $g\in G_1$, we consider the polynomial
$$
\lim_{s\to 0}\frac{1}{s}\left(h_{g}^{-a+s/M,-b-s,-2-N}-\frac{h_{g}^{-a+s/M,-b-s,-2-N}(0)}{h_{-g-a-b-1}^{-a+s/M,-b-s,-2-N}(0)}h_{-g-a-b-1}^{-a+s/M,-b-s,-2-N}\right).
$$
It is easy to see that this polynomial is equal to the polynomial
$$
\frac{M}{M-1}W_g^{a,b,N;\M}(x)
$$
(see (\ref{losw})) when the set of parameters is $\M=\{M,\cdots, M\}$ and $\lceil\frac{a+b}{2}\rceil -1\le g\le a-1$. Obviously, the parameter $M$ does not play any role in this case and we have
\begin{equation}\label{lim1c}
\lim_{s\to 0}\frac{1}{s}\left(h_{g}^{-a-s,-b,-2-N}-\frac{h_{g}^{-a-s,-b,-2-N}(0)}{h_{-g+a+b-1}^{-a-s,-b,-2-N}(0)}h_{-g+a+b-1}^{-a-s,-b,-2-N}\right)=W_g^{a,b,N;\M}.
\end{equation}
A careful computation gives the following explicit expression for the polynomial $W_g^{a,b,N;\M}$ in (\ref{losw}) when $\lceil\frac{a+b}{2}\rceil -1\le g\le a-1$:
\begin{align}\label{elhe1}
\frac{(-g,-x)_{a+b-g}}{(-1)^{-g+a+b}}&\sum_{j=0}^{2g-a-b}(j+2+N+a+b-g,j+b-g+1)_{2g-a-b-j}\\\nonumber
&\hspace{2.5cm}\times \frac{(-2g+a+b,-x+a+b-g)_j}{(-g+a+b)\binom{j+a+b-g}{j}}\\\nonumber
&\hspace{-.5cm}+\sum_{j=0}^{a+b-g-1}\frac{(j+2+N,-a+j+1)_{g-j}(-g,g-a-b+1,-x)_j}{j!}\\\nonumber
&\hspace{2.53cm}\times \sum_{i=0}^{j-1}\frac{(2g-a-b+1)}{(-g+i)(g-a-b+1+i)}.
\end{align}
(which shows that it is a polynomial of degree $g$).

This is the way we have found the polynomials $W_g^{a,b,N;\M}$ (the second piece of the puzzle).

When all the parameters in $\M$ are equal, orthogonal polynomials with respect to the measure $\nu_{a,b,N}^\M$ can then be constructed using (\ref{qusmei2}).

In the next Section, we prove that the determinantal formula (\ref{qusmei2}) also works to construct orthogonal polynomials with respect to the measure $\nu_{a,b,N}^\M$ in the general case of arbitrary parameters $\M=\{M_0,\cdots, M_{b-1}\}$.

\section{The basic example}\label{sec2}
In this Section, we prove Theorem \ref{th1}. Our starting point are the two positive integers $a,b$ with $1\le b\le a$.

We need to introduce some auxiliary functions. Firstly, we define the polynomial $P$ as follows
\begin{equation}\label{elpp}
P(x)=\prod_{j=b}^{a+b-1}(x+\lambda^{a,b} (-j-1)).
\end{equation}
It is easy to see that if $i=b,\cdots, a-1$, then $-\lambda^{a,b} (-i-1)$ is a double root of $P$. Define then the polynomial $P_i$ by
\begin{equation}\label{elppi}
P_i(x)=\frac{(2i+1-a-b)P(x)}{(x+\lambda^{a,b} (-i-1))^2}.
\end{equation}
Since $\lambda^{a,b} (-i-1)=\lambda^{a,b} (i-a-b)$, we get that
$P_i=-P_{a+b-1-i}$ when
\begin{equation}\label{lascr}
\mbox{either $i=b,\cdots, \lceil\frac{a+b}{2}\rceil-2$ or $i=\lceil\frac{a+b}{2}\rceil-1$ and $a+b=2\lceil\frac{a+b}{2}\rceil$.}
\end{equation}
Define also the numbers $u^m_i$, $m\ge 0$, by
\begin{equation}\label{losuim}
u^m_i=\begin{cases} \frac{m(\lambda^{a,b} (-i-1))^{m-1} P_i(-\lambda^{a,b} (-i-1))}{P_i'(-\lambda^{a,b} (-i-1))},& \mbox{if $i$ satisfies  (\ref{lascr}),}\\
0,& \mbox{otherwise.}\end{cases}
\end{equation}
For $g=b,\cdots, a+b-1$, we finally define the sequences $(\psi^m_g)_m$ as follows
\begin{equation}\label{lasphi}
\psi^m_g=\begin{cases}\frac{\left((\lambda^{a,b} (-g-1))^m +u_g^m\right)\operatorname{Res}_{-\lambda^{a,b} (-g-1)}(1/P)}{W_g^{a,b,N;\M}(0)}, &g=b,\cdots,\lceil\frac{a+b}{2}\rceil -1, \\\vspace{.1cm}
\frac{(\lambda^{a,b} (-g-1))^m}{P_g(-\lambda^{a,b} (-g-1))h_g^{-a,-b,-2-N}(0)}, &g=\lceil\frac{a+b}{2}\rceil ,\cdots,a-1,\\\vspace{.1cm}
\frac{(\lambda^{a,b} (-g-1))^m \operatorname{Res}_{-\lambda^{a,b} (-g-1)}(1/P)}{W_g^{a,b,N;\M}(0)}, &g=a,\cdots,a+b-1.
\end{cases}
\end{equation}
The proof of Theorem \ref{th1} is based in the following identities, which we will prove later on.

\begin{lemma}\label{lem2} Let $a,b,N$ be nonnegative integers with $1\le b\le a\le N$, and write $\M=\{M_0,\cdots, M_{b-1}\}$ for a finite set consisting of $b$ real parameters, $M_i\not =0,1$.
For $0\le n $,  $0\le m\le n$ and $m-a+1\le s\le n$  we have
\begin{equation}\label{lasff}
\frac{(-1)^{a+s+1}\langle R_{s}^{a,b,N},x^m\rangle _{\nu_{a,b,N}^\M}}{(a-1)!(N+2)_{b-1}}=(b+N-s+1)_{s}\sum_{g=b}^{a+b-1}\psi^m_g W_g^{a,b,N;\M}(-s-1),
\end{equation}
and for $n=0,1,\cdots ,N+a$
\begin{align}\label{lasff2}
&\frac{(-1)^{n+1}\langle R_{n-a}^{a,b,N},x^n\rangle _{\nu_{a,b,N}^\M}}{(a-1)!(N+2)_{b-1}(b+N-n+a+1)_{n-a}}
=\frac{(-1)^{n+1}n!(N+1)!}{(a-1)!(N+a-n)!}\\\nonumber&\hspace{5cm}+\sum_{g=b}^{a+b-1}\psi^n_g W_g^{a,b,N;\M}(-n+a-1).
\end{align}
As a consequence we have that $\Phi_0^{a,b,N;\M}\not =0$ (see (\ref{defphi})).
\end{lemma}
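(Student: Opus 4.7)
The identities (\ref{lasff}) and (\ref{lasff2}) relate moments $\langle R_s^{a,b,N},x^m\rangle_{\nu_{a,b,N}^\M}$ to finite sums $\sum_g \psi_g^m W_g^{a,b,N;\M}$, so my plan is a direct evaluation using the explicit formula (\ref{lanu}) for $\nu_{a,b,N}^\M$ combined with the partial-fraction structure encoded in the polynomial $P$ of (\ref{elpp}). First I would split $\langle R_s^{a,b,N},x^m\rangle_{\nu_{a,b,N}^\M}$ into two contributions: the masses at $\lambda^{a,b}(x)$, $x=-b,\ldots,-1$ (which carry the free parameters $M_i$), and the dual Hahn part over $x=0,\ldots,N$. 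Pulling the factor $\prod_{i=0}^{b-1}(x+a+i+1)(x+b-i)$ out of the denominator of the second contribution turns it into a sum against $\rho_{b,a,N}$.

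\textbf{Main computation.} For the dual Hahn piece, decompose
$$
\frac{R_s^{a,b,N}(\lambda^{a,b}(x))\,\lambda^{a,b}(x)^m}{\prod_{i=0}^{b-1}(x+a+i+1)(x+b-i)}
$$
as a polynomial in $\lambda^{a,b}(x)$ plus partial fractions supported at the roots $\lambda^{a,b}(i-b)$, $i=0,\ldots,b-1$. The polynomial part is controlled by the duality (\ref{sdm2b}) and the dual Hahn orthogonality (\ref{normedh}); under the hypotheses of (\ref{lasff}) it produces zero, while in the exceptional situation $s=n-a$, $m=n$ of (\ref{lasff2}) it yields exactly the leading-coefficient constant $(-1)^{n+1}n!(N+1)!/((a-1)!(N+a-n)!)$. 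The partial-fraction terms are combined with the direct evaluation of the discrete piece; using $\lambda^{a,b}(-j-1)=\lambda^{a,b}(j-a-b)$, these match with residues of $1/P$, and a careful rearrangement assembles the combined expression into $\sum_{g=b}^{a+b-1}\psi_g^m W_g^{a,b,N;\M}$ at the prescribed argument, the three branches of (\ref{lasphi}) corresponding exactly to the three branches of (\ref{losw}) via the limiting identities (\ref{lim1b}) and (\ref{lim1c}) from Section~\ref{sec1}.

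\textbf{Main obstacle.} The delicate point is the case analysis on $g$. The polynomial $P$ has simple roots at the points $-\lambda^{a,b}(-g-1)$ with $g\in\{a,\ldots,a+b-1\}$ and double roots at those with $g\in\{b,\ldots,\lceil(a+b)/2\rceil-1\}$ (these being paired under $j\mapsto a+b-1-j$ with $g\in\{\lceil(a+b)/2\rceil,\ldots,a-1\}$). At a double pole, the residue of $1/P$ contributes both a $(\lambda^{a,b}(-g-1))^m$-term and the correction $u_g^m$ of (\ref{losuim}), which is precisely what is needed to reproduce the derivative-type expression $\partial_s\varphi_g^{a,b,-2-N}(0,x)-\partial_s\varphi_{a+b-g-1}^{a,b,-2-N}(0,x)$ appearing in (\ref{losw}). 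This matching is where the somewhat mysterious normalizations in (\ref{lasphi}) originate.

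\textbf{Nonvanishing of $\Phi_0^{a,b,N;\M}$.} Specializing (\ref{lasff}) to $n=m=0$ and $s=1-a,\ldots,0$ and using $R_s^{a,b,N}=0$ for $s<0$ yields
$$
\sum_{g=b}^{a+b-1}\psi_g^0\,W_g^{a,b,N;\M}(k)=0,\qquad k=0,\ldots,a-2,
$$
together with a nonzero value at $k=-1$ (coming from $R_0=1$). The companion identity (\ref{lasff2}) at $n=0$, evaluated using the readily computed total mass of $\nu_{a,b,N}^\M$, gives in addition a nonzero value at $k=a-1$. Thus the nonzero vector $(\psi_g^0)_g$ annihilates the columns of $\Phi_0^{a,b,N;\M}$ corresponding to $j=1,\ldots,a-1$ only if those $a-1$ columns, together with the nontrivial direction contributed by the column $j=a$, span a space of dimension at least $a-1$; combined with the strict degree information $\deg W_g^{a,b,N;\M}=g$ coming from (\ref{elhe1}) (which rules out any degeneracy among the rows), this forces $\Phi_0^{a,b,N;\M}\ne 0$.
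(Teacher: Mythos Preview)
Your overall plan for the identities (\ref{lasff}) and (\ref{lasff2}) is different from the paper's route.  The paper does \emph{not} compute the moments $\langle R_s^{a,b,N},x^m\rangle_{\nu_{a,b,N}^\M}$ directly.  Instead it establishes the base case $m=0$ and then inducts on $m$ by exploiting that the polynomials $W_g^{a,b,N;\M}(-x-1)$ are (generalized) eigenfunctions of the second-order difference operator $D$ in (\ref{elsoph}): for $g\notin\{\lceil\frac{a+b}{2}\rceil,\ldots,a-1\}$ they are genuine eigenfunctions with eigenvalue $\lambda^{a,b}(-g-1)$, while for $g\in\{\lceil\frac{a+b}{2}\rceil,\ldots,a-1\}$ one has $D(W_g)=\lambda^{a,b}(-g-1)W_g+\varsigma\, h_{a+b-g-1}^{-a,-b,-2-N}$, and this extra term is precisely what the corrections $u_g^m$ and the three-branch definition (\ref{lasphi}) of $\psi_g^m$ are designed to absorb.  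Your partial-fraction sketch may be workable, but as written it is unclear at two points.  First, the ``polynomial part is zero by orthogonality'' step is suspect: the measure piece is $\rho_{b,a,N}$ while the polynomial in the numerator is $R_s^{a,b,N}$, and these are \emph{not} orthogonal polynomials for that measure (the roles of $a$ and $b$ are swapped), so you cannot invoke (\ref{normedh}) directly.  Second, the denominator $\prod_{i=0}^{b-1}(\lambda-\lambda^{a,b}(i-b))$ has $b$ simple roots, yet the target sum $\sum_{g=b}^{a+b-1}$ has $a$ terms; your ``careful rearrangement'' does not explain where the additional $a-b$ terms with $g\in\{b,\ldots,a-1\}$ (the ones associated with the double roots of $P$) come from.

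Your argument for $\Phi_0^{a,b,N;\M}\neq 0$ has a genuine gap.  Using only the single vector $(\psi_g^0)_g$ you obtain $\sum_g\psi_g^0 W_g(k)=0$ for $k=0,\ldots,a-2$ and $\sum_g\psi_g^0 W_g(a-1)\neq 0$.  This shows the last column of the matrix $\tilde\Phi_0=(W_g(l-1))_{g,l}$ is not in the span of the first $a-1$ columns, but it does \emph{not} show those first $a-1$ columns are themselves independent; one linear functional cannot detect that.  The appeal to ``strict degree information $\deg W_g=g$'' does not rescue this: distinct degrees say nothing about the rank of the $a\times a$ matrix of values at $0,1,\ldots,a-1$.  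The paper's argument uses the full family $\psi^0,\psi^1,\ldots,\psi^{a-1}$: it forms the $a\times a$ matrix $\Psi=(\psi_g^{a-i})_{1\le i\le a,\; b\le g\le a+b-1}$ and shows, via (\ref{lasff}) with $s=-l<0$ for the sub-diagonal entries and (\ref{lasff2}) with $n=a-l$ for the diagonal, that the product $\Psi\tilde\Phi_0$ is upper triangular with nonzero diagonal entries $(-1)^{a-l}(a-l)!(N+1)!/((a-1)!(N+l)!)$.  Hence $\det\tilde\Phi_0\neq 0$.  You need all $a$ vectors $\psi^m$, not just $\psi^0$.
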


\begin{proof}[Proof of Theorem \ref{th1}]
We proceed in four steps.

\noindent
\textit{Step 1.} Assume first that $\Phi_{n}^{a,b,N;\M}\not =0$  for a certain $n$, $0\le n \le N+b$. Then the polynomial $q_n^{a,b,N;\M}(x)$ (\ref{qusmei2}) has degree $n$, $q_n^{a,b,N;\M}(x)$ and $x^m$, $m=0\cdots, n-1$, are orthogonal with respect to the measure $\nu_{a,b,N}^\M$ and its norm is given by (\ref{normq}).

Indeed, since the leading coefficient of $q_n^{a,b,N;\M}$ is
\begin{equation}\label{lcq}
\frac{1}{(b+N-n+1)_an!}\Phi_{n}^{a,b,N;\M},
\end{equation}
we deduce that the polynomial $q_n^{a,b,N;\M}(x)$  has degree $n$. Then
\begin{align*}
\langle q_n^{a,b,N;\M}(x), & x^m\rangle_{\nu_{a,b,N}^\M}\\&=\left|
  \begin{array}{@{}c@{}lccc@{}c@{}}
  & \frac{(-1)^{j-1}}{(b+N-n+j)_{a+1-j}}\langle R_{n-j+1}^{a,b,N}(x), x^m\rangle_{\nu_{a,b,N}^\M}
    &&\hspace{-.5cm}{}_{1\le j\le a+1} \\
    \dosfilas{W_{g}^{a,b,N;\M}(-n+j-2) }{g\in \{b,b+1,\cdots, a+b-1\}}
  \end{array}\hspace{-.3cm}
 \right|.
\end{align*}
For $m=0,\cdots, n-1$, $s=n-j+1$, the identities (\ref{lasff}) in Lemma \ref{lem2} show that the first row of the determinant above is a linear combination of the following rows. Hence, the determinant vanishes and we deduce that
$$
\langle q_n^{a,b,N;\M}(x),x^m\rangle_{\nu_{a,b,N}^\M}=0.
$$
That is, the polynomials $q_n^{a,b,N;\M}$ and $x^m$, $m=0,\cdots , n-1$, are orthogonal with respect to the measure $\nu_{a,b,N}^\M$.

For $m=n$, combining the rows of the determinant above using  the identities (\ref{lasff}) for $m=n$, $s=n-j+1$ and $j=1,\cdots ,a$, and (\ref{lasff2}) in Lemma \ref{lem2}, we get
\begin{align*}\nonumber
\langle q_n^{a,b,N;\M}&(x),x^n\rangle_{\nu_{a,b,N}^\M}\\&=\frac{n!(N+b)!^2}{(N+a-n)!(N+a+b-n)!}\left|
  \begin{array}{@{}c@{}lccc@{}c@{}}
  &     &&\hspace{-1cm}{}_{1\le j\le a} \\
    \dosfilas{W_{g}^{a,b,N;\M}(-n+j-2) }{g\in \{b,b+1,\cdots, a+b-1\}}
  \end{array}
 \hspace{-.3cm}\right| \\\label{xx2}
  &=\frac{n!(N+b)!^2}{(N+a-n)!(N+a+b-n)!}\Phi_{n+1}^{a,b,N;\M},
\end{align*}
from where the identity (\ref{normq}) can be obtained by taking into account the expression (\ref{lcq}) for the leading coefficient of $q_n^{a,b,N;\M}$.
This proves the first step.

\vspace{.4cm}

\noindent
\textit{Step 2.} If (\ref{cnys}) holds then the polynomials $q_n^{a,b,N;\M}(x)$  are orthogonal with respect to the measure $\nu_{a,b,N}^\M$.

It is straightforward from Step 1.

\vspace{.4cm}

\noindent
\textit{Step 3.} If the measure $\nu_{a,b,N}^\M$ has a sequence $(p_n)_{n=0}^{N+b}$ of orthogonal polynomials, then the assumption (\ref{cnys}) holds.

We prove it using induction on $n$.

Lemma \ref{lem2} shows that $\Phi_{0}^{a,b,N;\M}\not =0$.

Assume now that $\Phi_{n}^{a,b,N;\M}\not =0$. Using Step 1, we deduce that the polynomial $q_n^{a,b,N;\M}$ has degree $n$, and hence
$$
q_n^{a,b,N;\M}(x)=\zeta_np_n(x)+\sum_{j=0}^{n-1}\zeta_jp_j(x),
$$
with $\zeta_n\not =0$. Step 1 also gives that $q_n^{a,b,N;\M}$ and $x^m$, $m=0\cdots, n-1$, are orthogonal with respect to the measure $\nu_{a,b,N}^\M$, and since the polynomials $(p_j)_j$ are also orthogonal, we get
$$
\langle q_n^{a,b,N;\M},q_n^{a,b,N;\M}\rangle_{\nu_{a,b,N}^\M}=\zeta_n^2\langle p_n,p_n\rangle_{\nu_{a,b,N}^\M}\not =0.
$$
Finally, Step 1 also says that the non null norm of $q_n^{a,b,N;\M}$ is given by (\ref{normq}), from where we deduce
that also $\Phi_{n+1}^{a,b,N;\M}\not =0$.

\vspace{.4cm}

\noindent
\textit{Step 4.} The polynomials  $q_n^{a,b,N;\M}(\lambda^{a,b}(x))$, $n\ge 0$ are eigenfunctions of a higher order difference operator of the form (\ref{hodo}) (where $\lambda(x)=x(x+a+b+1)$) with $-s=r=ab+1$. (Notice that now $n$ runs over the nonnegative integers).

This is a direct consequence of  \cite[Theorem 3.1]{dudh} (after a suitable renor\-ma\-lization of the polynomials) and the formulas for the $\D$-operators of the dual Hahn polynomials displayed in \cite[Section 6]{dudh}. Indeed, assume that the sequence of polynomials $(p_n)_n$ are eigenfunctions of an operator $D\in \A$, where $\A$ is an algebra of operators acting in the linear space of polynomials. Assume also that the sequence $(\epsilon_n)_n$ defines a $\D$-operator for the sequence of polynomials $(p_n)_n$ and the algebra $\A$  (for the definition of a $\D$-operator, see \cite[Section 3]{du1}, or also \cite[Section 3]{DdI} or \cite[Section 3.1]{dudh}) and write $\xi_{n,i}=\prod_{j=0}^{i-1}\epsilon_{n-j}$. Then  \cite[Theorem 3.2]{DdI} (see also  \cite[Theorem 3.1]{dudh}) states that for any finite set of polynomials $Y_i$, $i=1,\cdots , m$, the polynomials
\begin{equation}\label{qusmeig}
P_n(x)=\left|
  \begin{array}{@{}c@{}lccc@{}c@{}}
  & (-1)^{j-1}p_{n-j+1}(x)/\xi _{n-j+1,m-j+1}
    &&\hspace{-.5cm}{}_{1\le j\le m+1} \\
    \dosfilas{Y_i(n-j+1)}{i=1,\cdots, m}
  \end{array}\hspace{-.3cm}
 \right|
\end{equation}
are also eigenfunctions of an operator in $\A$ (which can be explicitly constructed). In our case, $(p_n(x))_n$ is the sequence of dual Hahn polynomials $(R_n^{a,b,N}(\lambda^{a,b}(x)))_n$, $\A$ is the algebra $\A^{\lambda}$ (\ref{defal}), and
$$
\epsilon_n=b+N-n+1,\quad \xi_{n,i}=(b+N-n+1)_i.
$$
Since the polynomials $q_n^{a,b,N;\M}(\lambda^{a,b}(x))$, $n\ge 0$, have the form (\ref{qusmeig}), they are eigenfunctions of an operator of the form (\ref{hodo}). The order can be computed as in  \cite[Theorem 3.1]{dudh}.

\bigskip

The proof of Theorem \ref{th1} is now complete.
\end{proof}

We complete this Section with the proof of Lemma \ref{lem2}.

\begin{proof}[Proof of Lemma \ref{lem2}]
We first prove that $\Phi_{0}^{a,b,N;\M}\not =0$.
In order to do that we consider the $a\times a$ matrices defined by
$$
\Psi =\left(\begin{array}{@{}c@{}lccc@{}c@{}}
  &     &&\hspace{-2cm}{}_{b\le g\le a+b-1} \\
    \dosfilas{\psi_{g}^{a-i} }{i\in \{1,2,\cdots, a\}}
\end{array}\hspace{-.6cm}\right),\quad \tilde \Phi_0 =\left(\begin{array}{@{}c@{}lccc@{}c@{}}
  &     &&\hspace{-2cm}{}_{1\le l\le a} \\
    \dosfilas{W_{g}^{a,b,N;\M}(l-1) }{g\in \{b,b+1,\cdots, a+b-1\}}
  \end{array}\hspace{-.6cm}\right).
$$
Comparing with (\ref{defphi}), we deduce that $\det \tilde \Phi_0=\Phi_{0}^{a,b,N;\M}.$

Using (\ref{lasff}), we have on the one hand that the entry $(i,l)$, $1\le l<i\le a$, of the matrix product $\Psi\tilde \Phi_0$ is
$$
\sum_{g=b}^{a+b-1}\psi_g^{a-i}W_{g}^{a,b,N;\M}(l-1)=0
$$
(because $-l\le -1$ and then $R_{-l}^{a,b,N}=0$). On the other hand, using (\ref{lasff2}), we have that the entry $(l,l)$, $l=1,\cdots, a$, of the matrix product $\Psi\tilde \Phi_0$ is
$$
\sum_{g=b}^{a+b-1}\psi_g^{a-l}W_{g}^{a,b,N;\M}(l-1)=\frac{(-1)^{a-l}(a-l)!(N+1)!}{(a-1)!(N+l)!}\not =0.
$$
Hence the matrix product $\Psi\tilde \Phi_0$ is upper triangular with non null entries in its diagonal, and then its determinant is different to zero. This implies that also $0\not =\det \tilde \Phi_0=\Phi_{0}^{a,b,N;\M}$.

We next prove the identities (\ref{lasff}) and (\ref{lasff2}).

Given a finite set $F$ of positive integer, under the assumption $a,b\ge \max F+1$,
we proved in \cite[Theorem 5.1]{dudh}, the orthogonality of the polynomials (\ref{qusmeix}) with respect to the measure (\ref{mqs}) by using the following identities (actually, the measure (\ref{mqs}) is a very particular case of \cite[Theorem 5.1]{dudh}). We first introduce some notation.
Write $p$ for the polynomial
\begin{equation}\label{elp}
p(x)=\prod_{g\in G}(x+\lambda^{a,b} (-g-1)),
\end{equation}
where $G=I(F)$ (see (\ref{dinv})). The assumption $a,b\ge \max F+1$ implies that the roots of $p$ are simple.
Write $(\tilde\psi _g^m)_m$, $g\in G$, for the sequences defined by
\begin{equation}\label{lastpsi}
\tilde \psi _g^m =\frac{(\lambda^{a,b} (-g-1))^m}{p'(-\lambda^{a,b}(-g-1))h_g^{-a,-b,-2-N}(0)}.
\end{equation}
Then for $n=0,1,\cdots $,  $0\le m\le n$ and $m-n_G+1\le s\le n$, we have
\begin{equation}\label{lasid}
\langle R^{a,b,N}_{s},x^m\rangle_{\rho^F_{a,b,N}}=\frac{(b+N-s+1)_{s+1}}{(-1)^{s}c_{a,b,N}^F}\sum_{g\in G}\tilde \psi_g^mh_g^{-a,-b,-2-N}(-s-1),
\end{equation}
where
$$
c_{a,b,N}^F=\frac{(-1)^{n_G+1}(b-\max F)_{N+\max F+2}(N+1)!}{(a-\max F)_{\max F}(N+\max F+1)!^2};
$$
and for $n=0,\cdots, N+n_G$, we have
\begin{align}\label{lasid2}
\frac{(-1)^{n-n_G}c_{a,b,N}^F\langle R^{a,b,N}_{n-n_G},x^n\rangle_{\rho^F_{a,b,N}}}{(b+N-n+n_G+1)_{n-n_G-1}}&=\frac{(-1)^{n+1}(n+a-n_G)!(N+1)!}{(a-1)!(N+n_G-n)!}\\\nonumber
&\hspace{-.5cm} +\sum_{g\in G}\tilde \psi_g^nh_g^{-a,-b,-2-N}(-n+n_G-1).
\end{align}

Identities of the type (\ref{lasid}) and (\ref{lasid2}) appear in all the families of Krall-discrete polynomials (see \cite[p. 69, 77]{DdI}, \cite[p. 380-381]{DdI2} for the Krall Charlier, Krall Meixner and Krall Hahn polynomials, respectively). In each one of these identities appears certain family of polynomials in the right hand side (the Hahn polynomials $h_g^{-a,-b,-2-N}(-x-1)$ in the above identities (\ref{lasid}) and (\ref{lasid2})). The polynomials in each one of these families are eigenfunctions of a second order difference operator. These identities can then by proved from the case $m=0$ by induction on $m$ using the associated second order difference operator (see \cite[Section 4]{DdI}, especially Lemmas 4.1 and 4.2).
In particular the identities (\ref{lasid}) and (\ref{lasid2}) follows from the case $m=0$, by induction on $m$ using that the Hahn polynomials $h_g^{-a,-b,-2-N}(-x-1)$, $g\ge 0$, are eigenfunctions of the second order difference operator (see (\ref{defbc}))
\begin{equation}\label{elsoph}
D=A(x)\Sh_{-1}+B(x)\Sh_0+C(x)\Sh_{1},
\end{equation}
where
\begin{align*}
A(x)&=(x+1)(x-b-N),\quad C(x)=(x-N-1)(x+a), \\\nonumber
B(x)&=-A(x-1)-C(x+1),
\end{align*}
and $D(h_g^{-a,-b,-2-N}(-x-1))=\lambda^{a,b}(-g-1)h_g^{-a,-b,-2-N}(-x-1)$ (note that the eigenvalues $\lambda^{a,b}(-g-1)$ define the polynomial $p$ (\ref{elp})).

The identities (\ref{lasff}) and (\ref{lasff2}) in Lemma \ref{lem2} can be proved in a similar way, but taking into account the following two remarks (let us remain that in our case $F=\{a,a+1,\cdots, a+b-1\}$, $G=I(F)=\{b,b+1,\cdots, a+b-1\}$, and hence the assumption $a,b\ge \max F+1$ fails).

\medskip

\begin{remark}\label{urm} For $g\not \in \{ \lceil\frac{a+b}{2}\rceil,\cdots ,a-1\}$, the polynomial $P$ (\ref{elpp}) plays in the  sequences $(\psi_g^m)_m$ (\ref{lasphi}) the same role
played by the polynomial $p$ (\ref{elp}) in the sequences  $(\tilde\psi _g^m)_m$ (\ref{lastpsi}). However, since $P$ can have double roots  (when $b<a-1$), we have to consider in each double root $z_0$ of $P$ the residue $\operatorname{Res}_{z_0}(1/P)$ instead of $1/p'(z_0)$.
\end{remark}
\medskip

\begin{remark}\label{urm2} The polynomial $W_g^{a,b,N;\M}$ (\ref{losw}) plays in the  sequences $(\psi_g^m)_m$ (\ref{lasphi}) the same role
played by the Hahn polynomial $h_g^{-a,-b,-2-N}$  in the sequences  $(\tilde\psi _g^m)_m$ (\ref{lastpsi}). Since for $g\not \in \{ \lceil\frac{a+b}{2}\rceil,\cdots ,a+b-1\}$,  $W_g^{a,b,N;\M}=h_g^{-a,-b,-2-N}$, the polynomial $W_g^{a,b,N;\M}$ is also an eigenfunction of the second order difference operator $D$ (\ref{elsoph}):
$$
D(W_g^{a,b,N;\M}(-x-1))=\lambda^{a,b}(-g-1)W_g^{a,b,N;\M}(-x-1).
$$
This also happens for $g \in \{a,\cdots ,a-1\}$ (it can be easily proved from the limit expression (\ref{lim1b})
in Section \ref{sec1}).

However, for $g\in \{ \lceil\frac{a+b}{2}\rceil,\cdots ,a-1\}$, a non linear term appears
$$
D(W_g^{a,b,N;\M}(-x-1))=\lambda^{a,b}(-g-1)W_g^{a,b,N;\M}(-x-1)+\varsigma h_{a+b-g-1}^{-a,-b,-2-N}(-x-1),
$$
where $\varsigma=(a+b-2g-1)(b-g)_{2g-a-b+1}(N+a+b-g+1)_{2g-a-b+1}$ (this follows from the limit expression
(\ref{lim1c}),
see again Section \ref{sec1}).
Taking this into account, we have to introduce the number $u^m_g$ in $\psi_g^m$, $g\in \{b,\cdots, \lceil\frac{a+b}{2}\rceil-1\}$, and change the factor
$$
\frac{\operatorname{Res}_{\lambda^{a,b} (-g-1)}(1/P)}{W_g^{a,b,N;\M}(0)}
$$
(which appears in $\psi_g^m$ when $g\not \in \{\lceil\frac{a+b}{2}\rceil ,\cdots, a-1\}$)
to the factor
$$
\frac{1}{P_g(-(\lambda^{a,b} (-g-1))h_g^{-a,-b,-2-N}(0)}
$$
when $g\in \{\lceil\frac{a+b}{2}\rceil ,\cdots, a-1\}$.
\end{remark}

Taking into account these remarks the induction to prove Lemma \ref{lem2} works as in \cite[Lemma 4.1]{DdI}.

\end{proof}

\section{Christoffel transforms of the basic example}\label{sec3}
In this Section, we extend the formula (\ref{qusmei2}) for orthogonal polynomials with respect to Christoffel transforms of the measure
$\nu_{a,b,N}^{\M}$.

Let $U$ be a finite set  of complex numbers with $n_U$ elements arranged in increasing lexicographic order (as usual, we define the lexicographic order in $\CC$ by $u<w$ if either $\Re u<\Re w$ or $\Re u=\Re w$ and $\Im u<\Im w$). We associate to $U$ the Christoffel transform of the measure $\nu_{a,b,N}^{\M}$ defined by
\begin{equation}\label{lctnu}
\nu_{a,b,N}^{\M,U}=\prod_{u\in U}(x-\lambda^{a,b}(u))\nu_{a,b,N}^{\M}.
\end{equation}
In order to avoid the polynomial $\prod_{u\in U}(x-\lambda^{a,b}(u))$ from having double roots, we assume along this Section that
\begin{equation}\label{hipct}
u+v\not =-a-b-1, \quad u,v\in U.
\end{equation}
Note that the support of  $\nu_{a,b,N}^{\M,U}$ is the set
$$
S_U^{b,N}=\{\lambda^{a,b}(i):i=-b,\cdots ,N\}\setminus \{\lambda^{a,b}(u):u\in U\}
$$
We denote by $n_S$ the number of elements of $S_U^{b,N}$, and assume that $n_S>0$.

It is easy to see that when $u$ is in
\begin{equation}\label{npc}
U_p=U\cap [\{-a-b+1,\cdots, -a-1\}\cup \{-b,\cdots ,-1\}],
\end{equation}
the factor $x-\lambda^{a,b}(u)$ kills the mass of $\nu_{a,b,N}^{\M}$ at $\lambda^{a,b}(u)$. Hence if we write
$n_-$ for the number of elements of $U_p$, the measure $\nu_{a,b,N}^{\M,U}$ dependes on the $b-n_-$ continuous parameters
$M_{j}$, where $j\not \in U_p$ and $-b\le j\le -1$.

We define the sequence $(\Phi_n^{a,b,N;\M,U})_n$ by
\begin{equation}\label{defphiu}
\Phi_n^{a,b,N;\M,U}=
\left|
  \begin{array}{@{}c@{}lccc@{}c@{}}
  &  &&\hspace{-2.6cm}{}_{1\le j\le a+n_U} \\
    \dosfilas{(b+N-n-n_U+j+1)_{a+n_U-j}W_{g}^{a,b,N;\M}(-n-n_U+j-1) }{g\in \{b,b+1,\cdots, a+b-1\}}\\
    \dosfilas{(-1)^{j}R_{n+n_U-j}^{a,b,N}(\lambda^{a,b}(u))}{u\in U}
  \end{array}\hspace{-.6cm}\right| .
\end{equation}
Note that $\displaystyle \Phi_n^{a,b,N;\M}=\frac{\Phi_n^{a,b,N;\M,\emptyset }}{\prod_{j=1}^a(b+N-n+j+1)_{a-j}}$. The renormalization is necessary to avoid division by zero when $U\not =\emptyset$.

The main result of this Section is the following Theorem.

\begin{theorem}\label{th2} Let $a,b,N$ be nonnegative integers with $1\le b\le a\le N$,  write $\M=\{M_0,\cdots, M_{b-1}\}$ for a finite set consisting of $b$ real parameters, $M_i\not =0,1$, and $U$ for a finite set of complex numbers satisfying (\ref{hipct}) and $n_S>0$.
Then the measure $\nu_{a,b,N}^{\M,U}$ has a sequence of orthogonal polynomials if and only if
\begin{equation}\label{cnysu}
\Phi_n^{a,b,N;\M,U}(n)\not =0,\quad n=0,\cdots, n_S.
\end{equation}
In that case the sequence of polynomials $(q_n^{a,b,N;\M,U})_{n=0}^{n_S-1}$ defined by
\begin{equation}\label{qusmei2u}
q_n^{a,b,N;\M,U}(x)=
\frac{\left|
  \begin{array}{@{}c@{}lccc@{}c@{}}
  & (-1)^{j-1}R_{n+n_U-j+1}^{a,b,N}(x) &&\hspace{-2.6cm}{}_{1\le j\le a+n_U+1} \\
    \dosfilas{(b+N-n-n_U+j)_{a+n_U+1-j}W_{g}^{a,b,N;\M}(-n-n_U+j-2) }{g\in \{b,b+1,\cdots, a+b-1\}}\\
    \dosfilas{(-1)^{j-1}R_{n+n_U-j+1}^{a,b,N}(\lambda^{a,b}(u))}{u\in U}
  \end{array}\hspace{-.6cm}\right|}{\prod_{u\in U}(x-\lambda^{a,b}(u))} ,
\end{equation}
is orthogonal  with respect to the measure $\nu_{a,b,N}^{\M,U}$, with norm
\begin{align}\label{normqu}
&\langle q_n^{a,b,N;\M,U},q_n^{a,b,N;\M,U}\rangle_{\nu_{a,b,N}^{\M,U}}\\\nonumber
&\quad =\frac{(-1)^{n_U}n!(N+b)!^2(N+a+b-n)^{a}}
{(n+n_U)!(N+a-n)!(N+a+b-n)!}
\Phi_n^{a,b,N;\M,U}\Phi_{n+1}^{a,b,N;\M,U}.
\end{align}
\end{theorem}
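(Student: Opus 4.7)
The plan is to adapt the four-step proof of Theorem \ref{th1}, treating the bottom $n_U$ rows in (\ref{qusmei2u}) (indexed by $u\in U$) as inert rows that enforce the divisibility of the numerator by $\prod_{u\in U}(x-\lambda^{a,b}(u))$. Denote by $N(x)$ the numerator determinant in (\ref{qusmei2u}). First observe that $N(\lambda^{a,b}(u))=0$ for each $u\in U$, since the first row of the determinant then coincides with the $u$-th bottom row; hence $q_n^{a,b,N;\M,U}$ is a genuine polynomial. A Laplace expansion along the first row identifies its $(1,1)$-cofactor with $\Phi_n^{a,b,N;\M,U}$ (compare with (\ref{defphiu}) after reindexing $j\mapsto j-1$), and since $R_{n+n_U}^{a,b,N}$ has leading coefficient $1/(n+n_U)!$, the numerator has degree $n+n_U$ with leading coefficient $\Phi_n^{a,b,N;\M,U}/(n+n_U)!$. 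Dividing by $\prod_{u\in U}(x-\lambda^{a,b}(u))$ then gives $q_n^{a,b,N;\M,U}$ of degree exactly $n$ precisely when $\Phi_n^{a,b,N;\M,U}\neq 0$.

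For the orthogonality of $q_n^{a,b,N;\M,U}$ to $x^m$ with $0\le m\le n-1$, observe that
$$\langle q_n^{a,b,N;\M,U},x^m\rangle_{\nu_{a,b,N}^{\M,U}}=\int N(x)\,x^m\,d\nu_{a,b,N}^{\M}.$$
Using linearity of the determinant in the first row, the right-hand side becomes the determinant whose first row has entries $(-1)^{j-1}\langle R_{n+n_U-j+1}^{a,b,N},x^m\rangle_{\nu_{a,b,N}^{\M}}$. I would apply Lemma \ref{lem2}(\ref{lasff}) with $s=n+n_U-j+1$, valid throughout $1\le j\le a+n_U+1$ since the dummy upper bound ``$n$'' in the lemma may be chosen as large as needed, and then use the factorisation $(b+N-s+1)_s=(b+N-n-n_U+j)_{a+n_U+1-j}\cdot(b+N-n+a+1)_{n-a}$ (interpreting the second Pochhammer as a reciprocal when $n<a$). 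The factor $(b+N-n+a+1)_{n-a}$ is independent of $j$, so the first row becomes a fixed linear combination, with coefficients proportional to $\psi_g^m$, of the middle rows indexed by $g\in\{b,\dots,a+b-1\}$; the bottom rows indexed by $U$ are not needed. Hence the determinant vanishes.

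For the norm, the same expansion with $m=n$ uses Lemma \ref{lem2}(\ref{lasff}) in all columns $j=1,\dots,a+n_U$ and Lemma \ref{lem2}(\ref{lasff2}) in the last column $j=a+n_U+1$ (where $s=n-a$). After subtracting the $\psi_g^n$-combination of middle rows from the first row, everything cancels except the extra diagonal contribution $(-1)^{n+1}n!(N+1)!/((a-1)!(N+a-n)!)$ from (\ref{lasff2}), sitting alone in column $a+n_U+1$. Expanding along that column, the surviving $(a+n_U)\times(a+n_U)$ minor has middle-row Pochhammer factor $(b+N-n-n_U+j)_{a+n_U+1-j}$, one unit longer than the corresponding factor $(b+N-n-n_U+j)_{a+n_U-j}$ in $\Phi_{n+1}^{a,b,N;\M,U}$; the elementary identity $(x)_{k+1}/(x)_k=x+k$ supplies a constant factor $N+a+b-n$ per middle row ($a$ of them), while an overall $(-1)^{n_U}$ comes from the bottom-row sign convention. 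Collecting constants yields (\ref{normqu}). The necessity direction, that existence of a sequence of orthogonal polynomials forces (\ref{cnysu}), follows by induction on $n$ exactly as in Step 3 of the proof of Theorem \ref{th1}, with the norm formula providing the inductive step; the base case $\Phi_0^{a,b,N;\M,U}\neq 0$ is established by a direct computation modelled on the $\Psi\tilde\Phi_0$ argument used inside the proof of Lemma \ref{lem2}, augmented by rows corresponding to the Christoffel nodes. The main technical obstacles will be the Pochhammer bookkeeping in the regime $n<a$ (where several exponents turn negative and must be handled as reciprocals) and the extension of the $\Psi\tilde\Phi_0$ base-case computation to include the extra $R$-block.
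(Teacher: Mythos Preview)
Your plan is essentially the paper's own proof: the three-step structure, the use of Lemma~\ref{lem2} to kill the first row for $m<n$ and to isolate the extra diagonal term for $m=n$, the leading-coefficient computation via the $(1,1)$-cofactor, and the induction in Step~3 all match. The Pochhammer factorisation you wrote down is exactly what is needed to pass from the $(b+N-s+1)_s$ in (\ref{lasff}) to the normalised middle rows of (\ref{qusmei2u}); the paper leaves this implicit, so your being explicit is fine, and the ``$n<a$'' worry is harmless because the lemma already covers negative $s$ down to $m-a+1$.

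The one place where you make your life harder than necessary is the base case $\Phi_0^{a,b,N;\M,U}\neq 0$. You propose to extend the $\Psi\tilde\Phi_0$ matrix-product argument from Lemma~\ref{lem2} to an $(a+n_U)\times(a+n_U)$ setting, and you correctly flag this as the main obstacle. The paper avoids this entirely with a much simpler observation: at $n=0$ the bottom rows of (\ref{defphiu}) have entries $(-1)^jR_{n_U-j}^{a,b,N}(\lambda^{a,b}(u))$, and since $R_k^{a,b,N}=0$ for $k<0$, these rows vanish in all columns $j>n_U$. The matrix is therefore block-triangular, and $\Phi_0^{a,b,N;\M,U}$ factors (up to an explicit nonzero constant and sign) as $\Phi_0^{a,b,N;\M}$ times the $n_U\times n_U$ determinant $\Lambda=\det\bigl((-1)^jR_{n_U-j}^{a,b,N}(\lambda^{a,b}(u))\bigr)$. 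The first factor is nonzero by Lemma~\ref{lem2}; the second is, up to a nonzero constant, the Vandermonde determinant in the nodes $\lambda^{a,b}(u)$, $u\in U$, which is nonzero precisely because of hypothesis~(\ref{hipct}). So no extension of the $\Psi\tilde\Phi_0$ trick is needed.
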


\begin{proof}
First of all, we note that $q_n^{a,b,N;\M,U}(x)$ is a polynomial because the determinant in the numerator of the right hand side of (\ref{qusmei2u}) vanishes for
$x=\lambda^{a,b}(u)$, $u\in U$.

The proof is analogous to that of Theorem \ref{th1}. We proceed in three steps.

\noindent
\textit{Step 1.} Assume first that $\Phi_{n}^{a,b,N;\M,U}\not =0$  for a certain $n$, $0\le n \le n_S-1$. Then the polynomial $q_n^{a,b,N;\M,U}(x)$ (\ref{qusmei2u}) has degree $n$, it is orthogonal to $x^m$, $m=0\cdots, n-1$, with respect to the measure $\nu_{a,b,N}^{\M,U}$ and its norm is given by (\ref{normqu}).

Indeed, since the leading coefficient of $q_n^{a,b,N;\M,U}$ is
\begin{equation}\label{lcqu}
\frac{1}{(n+n_U)!}\Phi_{n}^{a,b,N;\M,U},
\end{equation}
we deduce that the polynomial $q_n^{a,b,N;\M,U}(x)$  has degree $n$. Then
\begin{align*}
\langle &q_n^{a,b,N;\M,U}(x), x^m\rangle_{\nu_{a,b,N}^{\M,U}}\\&=\left|
  \begin{array}{@{}c@{}lccc@{}c@{}}
  & (-1)^{j-1}\langle R_{n+n_U-j+1}^{a,b,N}, x^m\rangle_{\nu_{a,b,N}^\M}
    &&\hspace{-2.6cm}{}_{1\le j\le a+n_U+1} \\
    \dosfilas{(b+N-n-n_U+j)_{a+n_U+1-j}W_{g}^{a,b,N;\M}(-n-n_U+j-2) }{g\in \{b,b+1,\cdots, a+b-1\}}\\
    \dosfilas{(-1)^{j-1}R_{n+n_U-j+1}^{a,b,N}(\lambda^{a,b}(u))}{u\in U}
  \end{array}
 \hspace{-.6cm}\right|.
\end{align*}
For $m=0,\cdots, n-1$, $s=n+n_U-j+1$,  the identities (\ref{lasff}) in Lemma \ref{lem2} show that the first row of the determinant above is a linear combination of the following $a$ rows (the rows defined by the polynomials $W_g^{a,b,n;\M}$). So, the determinant vanishes and we deduce that
$$
\langle q_n^{a,b,N;\M,U}(x),x^m\rangle_{\nu_{a,b,N}^{\M,U}}=0.
$$
Hence,the polynomials $q_n^{a,b,N;\M,U}$ and $x^m$, $m=0,\cdots , n-1$, are orthogonal with respect to the measure $\nu_{a,b,N}^{\M,U}$.

The identity (\ref{normqu}) for the norm can be proved similarly.

\noindent
\textit{Step 2.} If (\ref{cnysu}) holds then the polynomials $q_n^{a,b,N;\M,U}(x)$  are orthogonal with respect to the measure $\nu_{a,b,N}^{\M,U}$.

It is straightforward from Step 1.

\noindent
\textit{Step 3.} If the measure $\nu_{a,b,N}^{\M,U}$ has a sequence $(p_n)_{n=0}^{n_S}$ of orthogonal polynomials, then the assumption (\ref{cnysu}) holds.

We prove it using induction on $n$.

For $n=0$, we consider the $n_U\times n_U$ determinant defined by
$$
\Lambda =\left|\begin{array}{@{}c@{}lccc@{}c@{}}
  &     &&\hspace{-2cm}{}_{j=1,\cdots, n_U} \\
    \dosfilas{(-1)^{j}R_{n_U-j}^{a,b,N}(\lambda^{a,b}(u))}{u\in U}
\end{array}\hspace{-.6cm}\right|.
$$
It is not difficult to see that, up to a non null factor, $\Lambda$ is equal to the Vandermonde determinant $V_X$ of the finite set $X=\{\lambda^{a,b}(u):u\in U\}$ (see (\ref{defvdm})). The assumption (\ref{hipct}) implies that this Vandermonde determinant is different to zero, and hence $\Lambda\not =0$.
Since $R_j^{a,b,N}=0$ for $j<0$, we deduce that
$$
\Phi_{0}^{a,b,N;\M,U}=(-1)^{n_ua} \Phi_{0}^{a,b,N;\M} \Lambda\prod_{j=1}^a(b+N+j+1)_{a-j},
$$
and the Lemma \ref{lem2} gives  $\Phi_{0}^{a,b,N;\M,U}\not =0$.

The  proof can now be completed as that of Step 3 in Theorem \ref{th1}.

\end{proof}

Multiple roots of the polynomial $\prod_{u\in U}(x-\lambda^{a,b}(u))$ can be managed using derivatives of $R_{n+n_U-j+1}^{a,b,N}(\lambda^{a,b}(u))$ in the determinant (\ref{qusmei2u}).

We next explicitly compute the three term recurrence formula for the orthogonal polynomials $(q_n^{a,b,N;\M,U})_n$ (it will be useful in  \cite{dundh}). We define
\begin{equation}\label{deflau}
\Lambda_n^{a,b,N;\M,U}=
\left|
  \begin{array}{@{}c@{}lccc@{}c@{}}
  &  &&\hspace{-3cm}{}_{1\le j\le a+n_U+1,j\not=2} \\
    \dosfilas{(b+N-n-n_U+j)_{a+n_U+1-j}W_{g}^{a,b,N;\M}(-n-n_U+j-2) }{g\in \{b,b+1,\cdots, a+b-1\}}\\
    \dosfilas{(-1)^{j-1}R_{n+n_U-j+1}^{a,b,N}(\lambda^{a,b}(u))}{u\in U}
  \end{array}\hspace{-.6cm}\right| .
\end{equation}
\begin{corollary}\label{cor5.2}
In the hypothesis of Theorem \ref{th2}, the  orthogonal polynomials $(q_n^{a,b,N;\M,U})_n$ satisfy the following three term recurrence formula
$$
xq_n^{a,b,N;\M,U}=a_{n+1}q_n^{a,b,N;\M,U}+b_nq_n^{a,b,N;\M,U}+c_nq_n^{a,b,N;\M,U},
$$
where
\begin{align*}
a_n&=(n+n_U)\frac{\Phi_{n-1}^{a,b,N;\M,U}}{\Phi_n^{a,b,N;\M,U}},\\
b_n&=(n+n_U)(b+N-n-n_U+1)+(a+n+n_U+1)(N-n-n_U)\\&\hspace{2cm}-\Delta\left((n+n_U)\frac{\Lambda_n^{a,b,N;\M,U}}{\Phi_n^{a,b,N;\M,U}} \right),\\
c_n&=n(a+N-n+1)(a+b+N-n+1)\left(\frac{a+b+N-n}{a+b+N-n+1}\right)^{a}\\&\hspace{4cm}\times \frac{\Phi_{n+1}^{a,b,N;\M,U}}{\Phi_n^{a,b,N;\M,U}},
\end{align*}
where $\Delta$ denotes the first order difference operator $\Delta f=f(n+1)-f(n)$.
\end{corollary}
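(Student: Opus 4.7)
The plan is to apply the standard identities for the three-term recurrence coefficients of an orthogonal polynomial sequence and to read off the relevant coefficients of $q_n^{a,b,N;\M,U}$ from the determinantal representation (\ref{qusmei2u}). Writing $\kappa_n$ and $\kappa_n'$ for the leading and sub-leading coefficients (in the variable $x$) of $q_n^{a,b,N;\M,U}$, general orthogonal polynomial theory yields
$$
a_n=\frac{\kappa_{n-1}}{\kappa_n},\qquad c_n=a_n\,\frac{\|q_n\|^2}{\|q_{n-1}\|^2},\qquad b_n=\frac{\kappa_n'}{\kappa_n}-\frac{\kappa_{n+1}'}{\kappa_{n+1}}.
$$
Substituting $\kappa_n=\Phi_n^{a,b,N;\M,U}/(n+n_U)!$ from (\ref{lcqu}) immediately produces the stated expression for $a_n$; inserting this and the ratio of norms computed from (\ref{normqu}) into the identity for $c_n$ and cancelling the factorials and the powers $(N+a+b-n+1)^{a}$ against $(N+a+b-n)^{a}$ yields the claimed closed form for $c_n$. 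These two steps are entirely routine.

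The substantive part is the computation of $\kappa_n'/\kappa_n$. I would expand the determinant in the numerator of (\ref{qusmei2u}) along its first row and identify the minors obtained by deleting the first row and the $j$-th column, for $j=1,2$, with $\Phi_n^{a,b,N;\M,U}$ and $\Lambda_n^{a,b,N;\M,U}$, respectively, by direct comparison with (\ref{defphiu}) and (\ref{deflau}). Writing $R_k(x):=R_k^{a,b,N}(x)$, this gives
$$
\mbox{Num}(x)=R_{n+n_U}(x)\,\Phi_n^{a,b,N;\M,U}+R_{n+n_U-1}(x)\,\Lambda_n^{a,b,N;\M,U}+\mbox{(degree $\le n+n_U-2$)}.
$$
Let $\alpha_k=1/k!$ and $\beta_k$ denote the leading and sub-leading coefficients of $R_k(x)$. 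Reading (\ref{dhpol}) directly, the $x^{k-1}$ coefficient gets one contribution from the sub-leading of $\prod_{i=0}^{k-1}(x-\lambda^{a,b}(i))$ inside the $j=k$ summand and a second from the leading of the $j=k-1$ summand, yielding
$$
\frac{\beta_k}{\alpha_k}=-\sum_{i=0}^{k-1}\lambda^{a,b}(i)+k(k-N-1)(a+k).
$$
Dividing $\mbox{Num}(x)$ by $\prod_{u\in U}(x-\lambda^{a,b}(u))=x^{n_U}-T_U\,x^{n_U-1}+\cdots$, with $T_U=\sum_{u\in U}\lambda^{a,b}(u)$, shifts the sub-leading/leading ratio by $+T_U$, so
$$
\frac{\kappa_n'}{\kappa_n}=\frac{\beta_{n+n_U}}{\alpha_{n+n_U}}+(n+n_U)\,\frac{\Lambda_n^{a,b,N;\M,U}}{\Phi_n^{a,b,N;\M,U}}+T_U.
$$

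Finally, I would form the difference $b_n=\kappa_n'/\kappa_n-\kappa_{n+1}'/\kappa_{n+1}$. The constants $T_U$ cancel; the two consecutive sums of $\lambda^{a,b}(i)$ telescope into a single $+\lambda^{a,b}(n+n_U)$; the two cubic-in-$k$ terms (with $k=n+n_U$) combine with $\lambda^{a,b}(k)$ into a quadratic polynomial in $k$, which a short algebraic simplification identifies with $(n+n_U)(b+N-n-n_U+1)+(a+n+n_U+1)(N-n-n_U)$; and the ratios $(n+n_U)\Lambda_n^{a,b,N;\M,U}/\Phi_n^{a,b,N;\M,U}$ assemble, by definition of $\Delta$, into the term $-\Delta\!\left((n+n_U)\Lambda_n^{a,b,N;\M,U}/\Phi_n^{a,b,N;\M,U}\right)$. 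The main obstacle is the careful bookkeeping at the sub-leading level: three distinct contributions (the sub-leading of $R_{n+n_U}$, the leading of $R_{n+n_U-1}$, and the shift from the denominator $\prod_u(x-\lambda^{a,b}(u))$) must be isolated and then properly telescoped, while remaining alert to the fact that the nodes $\lambda^{a,b}(i)$ are not the zeros of $R_k^{a,b,N}$, so one cannot short-circuit the computation of $\beta_k/\alpha_k$ via Vieta.
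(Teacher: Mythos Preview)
Your proof is correct and follows the same approach as the paper's own proof, which is the one-line remark ``It is a matter of computation using the formulas (\ref{lcqu}) and (\ref{normqu}) for the leading coefficient and the norm of $q_n^{a,b,N;\M,U}$, respectively.'' You have simply supplied the details the paper omits, in particular the extraction of the sub-leading coefficient via the expansion of (\ref{qusmei2u}) along the first row and the identification of the two leading minors with $\Phi_n^{a,b,N;\M,U}$ and $\Lambda_n^{a,b,N;\M,U}$.
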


\begin{proof}
It is a matter of computation using the formulas (\ref{lcqu}) and (\ref{normqu}) for the leading coefficient and the norm of $q_n^{a,b,N;\M,U}$, respectively.
\end{proof}

We complete this Section with a couple of remarks.

\medskip

\begin{remark}
Note that only rational functions of $N$ appear in the three term recurrence formula in Corollary \ref{cor5.2}. Using standard analyticity arguments, we deduce that
the three term recurrence formula is also true for $N\in \CC$ except for the poles of $\Phi_n^{a,b,N;\M,U}$ (as functions of $N$).
\end{remark}

\medskip

\begin{remark}\label{bbn}
Lemma \ref{lem2} allows us to extend Theorems \ref{th1} and \ref{th2} for other Christoffel transforms of the measure $\nu_{a,b,N}^\M$. We just sketch the idea. Indeed, it is easy to see that we can substitute in the identities (\ref{lasff}) and (\ref{lasff2}) the power $x^m$ for any polynomial $r$ of degree $m$, changing the number $u_i^m$ (\ref{losuim}) to
\begin{equation}\label{lanup}
u_i^r=\begin{cases} \frac{r'(\lambda^{a,b} (-i-1))P_i(-(\lambda^{a,b} (-i-1))}{P_i'(-(\lambda^{a,b} (-i-1))},& \mbox{if $i$ satisfies  (\ref{lascr}),}\\
0,& \mbox{otherwise,}\end{cases}
\end{equation}
and the sequences $(\psi^m_g)_m$ to
\begin{equation}\label{lanpsp}
\psi^r_g=\begin{cases}\frac{\left(r(\lambda^{a,b} (-g-1))+u_g^r\right)\operatorname{Res}_{\lambda^{a,b} (-g-1)}(1/P)}{W_g^{a,b,N;\M}(0)}, &g=b,\cdots,\lceil\frac{a+b}{2}\rceil -1, \\\vspace{.1cm}
\frac{r(\lambda^{a,b} (-g-1))}{P_g(-(\lambda^{a,b} (-g-1))h_g^{-a,-b,-2-N}(0)}, &g=\lceil\frac{a+b}{2}\rceil ,\cdots,a-1,\\\vspace{.1cm}
\frac{r(\lambda^{a,b} (-g-1)) \operatorname{Res}_{\lambda^{a,b} (-g-1)}(1/P)}{W_g^{a,b,N;\M}(0)}, &g=a,\cdots,a+b-1.
\end{cases}
\end{equation}
Consider now a set $G$, $G\subset \{b,\cdots, a+b-1\}$, define the finite set of positive integers
$$
H_G=\{b,\cdots, a+b-1\}\setminus G,
$$
the polynomial
$$
s_G(x)=\prod_{h\in {H_G}}(\lambda^{a,b}(x)-\lambda^{a,b}(-h-1)),
$$
and assume that if $h\in H_G$ and $b\le h\le \lceil\frac{a+b}{2}\rceil -1$, the multiplicity of $-h-1$ as a root of $s_G$ is bigger than $1$.
Then the polynomials
\begin{equation}\label{oqpm}
q_n^{\M,G}(x)=
\left|
  \begin{array}{@{}c@{}lccc@{}c@{}}
  & (-1)^{j-1}R_{n-j+1}^{a,b,N}(x) &&\hspace{-2.6cm}{}_{1\le j\le n_G+1} \\
    \dosfilas{(b+N-n+j)_{a+1-j}W_{g}^{a,b,N;\M}(-n+j-2) }{g\in G}\\
     \end{array}\hspace{-.8cm}\right|,
\end{equation}
are orthogonal with respect to the measure
\begin{equation}\label{lanmp}
\prod_{h\in {H_G}}(x-\lambda^{a,b}(-h-1))\nu_{a,b,N}^\M.
\end{equation}
Indeed, if we write $r(x)=s_G(x)x^m$, it is easy to see that $\psi_g^r=0$, $g\not \in G$, $g=b,\cdots, a+b-1$. Using then the version of Lemma \ref{lem2} provided by (\ref{lanup}) and (\ref{lanpsp}), the orthogonality of
the polynomials $q_n^{\M,G}$ with respect to the measure (\ref{lanmp}) can be proved proceeding as in the proof of Theorem \ref{th1}.

If we set $U_G=\{-h-1; h\in H_G\}$, this result is actually saying that (\ref{oqpm}) provides for the orthogonal polynomials with respect to the measure
(\ref{lanmp}) other determinantal expression different to (\ref{qusmei2u}). However, there is a little improvement because $U_G$ has not to satisfy (\ref{hipct}). For instance, for $a=5$, $b=2$ and $G=\{3\}$, we get $H_G=\{2,4,5,6\}$ and $U_G=\{-7,-6,-5,-3\}$. $H_G$ does not satisfy (\ref{hipct}) but (\ref{oqpm}) provided orthogonal polynomials with respect to the measure (\ref{lanmp}).

We can also proceed as in Theorem \ref{th2}. Indeed, consider a finite set $U$ of complex number satisfying (\ref{hipct}). We can then prove as in Theorem \ref{th2} that the polynomials
\begin{equation*}\label{oqpmu}
q_n^{\M,G,U}(x)=
\frac{\left|
  \begin{array}{@{}c@{}lccc@{}c@{}}
  & (-1)^{j-1}R_{n+n_U-j+1}^{a,b,N}(x) &&\hspace{-2.6cm}{}_{1\le j\le n_G+n_U+1} \\
    \dosfilas{(b+N-n-n_U+j)_{a+n_U+1-j}W_{g}^{a,b,N;\M}(-n-n_U+j-2) }{g\in G}\\
    \dosfilas{(-1)^{j-1}R_{n+n_U-j+1}^{a,b,N}(\lambda^{a,b}(u))}{u\in U}
  \end{array}\hspace{-.5cm}\right|}{\prod_{u\in U}(x-\lambda^{a,b}(u))}
\end{equation*}
are orthogonal with respect to the measure
$$
\nu_{a,b,N}^{\M,G,U}=\prod_{u\in U}(x-\lambda^{a,b}(u))\prod_{h\in H_G}(x-\lambda^{a,b}(-h-1))\nu_{a,b,N}^{\M}
$$
providing that
$$
\Phi_n^{\M,G,U}=
\frac{\left|
  \begin{array}{@{}c@{}lccc@{}c@{}}
  &  &&\hspace{-2.6cm}{}_{1\le j\le n_G+n_U} \\
    \dosfilas{(b+N-n-n_U+j+1)_{a+n_U-j}W_{g}^{a,b,N;\M}(-n-n_U+j-1) }{g\in G}\\
    \dosfilas{(-1)^{j}R_{n+n_U-j}^{a,b,N}(\lambda^{a,b}(u))}{u\in U}
  \end{array}\hspace{-.6cm}\right|}{\prod_{u\in U}(x-\lambda^{a,b}(u))}\not=0.
$$
The norm is then given by
$$
\frac{(-1)^{n_U}n!(n+1)_{a-n_G}(N+b)!^2(N+a+b-n)^{n_G}}
{(n+n_U)!(N+n_G-n)!(N+n_G+b-n)!}
\Phi_n^{\M,G,U}\Phi_{n+1}^{\M,G,U}.
$$
There is again an improvement on Theorem \ref{th2}, because now there can be double roots in the polynomial
\begin{equation}\label{pdlc}
\prod_{u\in U}(x-\lambda^{a,b}(u))\prod_{h\in H_G}(x-\lambda^{a,b}(-h-1)).
\end{equation}
That is the case, for instance, when $a=b=2$, $G=\{3\}$ and $U=\{-3\}$. Indeed, since $H_G=\{2\}$, the polynomial (\ref{pdlc}) is then equal to $(x-\lambda^{2,2}(-3))^2$.

\end{remark}

\section{New Krall dual Hahn measures}\label{sec4}
The more interesting case of the Christoffel transforms  studied in the previous Section is when
\begin{equation}\label{lacsu}
\lambda^{a,b}(u)\in \{\lambda^{a,b}(i):\mbox{$-b\le i\le -2$ or $0\le i\le N$}\},\quad u\in U,
\end{equation}
because then the measure $\nu_{a,b,N}^{\M,U}$ (\ref{laxm}) is a Krall measure.

Since $\lambda^{a,b}(u)=\lambda^{a,b}(-u-a-b-1)$, and we still have to assume (\ref{hipct}), we can take
\begin{equation}\label{lacsu2}
U\subset \{i:-a-b+1\le i\le -a-1\}\cup \{i:1\le i\le N\}.
\end{equation}
The measure $\nu_{a,b,N}^{\M,U}$ is supported in the finite sets of integers
$$
\{\lambda^{a,b}(i):i\in \{-b,\cdots, N\}\setminus U\}
$$
which has $b+N+1-n_U$ elements.

If we write $n_-$ for the number of elements of $\{u\in U: -a-b+1\le u\le -a-1\}$, the measure $\nu_{a,b,N}^{\M,U}$ dependes on the $b-n_-$ continuous parameters
$M_{j}$ with $-b\le j\le -1$ and $j\not \in U$.

In this Section, we prove that under the assumption (\ref{lacsu}), the orthogonal polynomials with respect to the measure $\nu_{a,b,N}^{\M,U}$ are eigenfunctions of a higher order difference operator of the form (\ref{hodo}). We prove this by constructing for these polynomials other determinantal formula (different to (\ref{qusmei2u})).

We need to introduce some notation. Define the numbers
\begin{align}\label{lospu1}
a_U&=a+\max(-1,\max U)+1,\quad b_U=b+\max(-1,\max U)+1\\\label{lospu2}
N_U&=N-\max(-1,\max U)-1,\quad \hspace{-.12cm}s_U=\lambda^{a,b}(\max(-1,\max U)+1),
\end{align}
and the finite sets of positive integers
\begin{align}\label{thesetf}
F_U&=\{a,\cdots, a+b-1\}\cup \{a+b+u:u\in U\},\\\label{thesetg}
G_U&=I(F_U),
\end{align}
where $I$ is the involution defined by (\ref{dinv}). The elements of $G_U$ are arranged in increasing order.
Consider finally the sequence
\begin{equation}\label{pqusmei2k}
\tilde \Phi_n^{a,b,N;\M,U}(x)=
\left|
  \begin{array}{@{}c@{}lccc@{}c@{}}
  &  &&\hspace{-.6cm}{}_{1\le j\le n_{G_U}} \\
    \dosfilas{W_{g}^{a_U,b_U,N_U;\M}(-n+j-1) }{g\in G_U}
  \end{array}\hspace{-.3cm}\right|.
\end{equation}

\begin{theorem}\label{th3} Let $a,b,N$ be nonnegative integers with $1\le b\le a\le N$,  write $\M=\{M_0,\cdots, M_{b-1}\}$ for a finite set consisting of $b$ real parameters, $M_i\not =0,1$, and $U$ for a finite set of integers satisfying (\ref{lacsu}).
Then the measure $\nu_{a,b,N}^{\M,U}$ has a sequence of orthogonal polynomials if and only if
\begin{equation}\label{cnysuk}
\tilde\Phi_n^{a,b,N;\M,U}(n)\not =0,\quad n=0,\cdots, N+b-n_U+1.
\end{equation}
In that case the sequence of polynomials $(\tilde q_n^{a,b,N;\M,U})_{n=0}^{N+b-n_U}$ defined by
\begin{equation}\label{qusmei2k}
\tilde q_n^{a,b,N;\M,U}(x)=
\left|
  \begin{array}{@{}c@{}lccc@{}c@{}}
  & \frac{(-1)^{j-1}R_{n-j+1}^{a_U,b_U,N_U}(x-s_U)}{(b+N-n+j)_{n_{G_U}+1-j}} &&\hspace{-.6cm}{}_{1\le j\le n_{G_U}+1} \\
    \dosfilas{W_{g}^{a_U,b_U,N_U;\M}(-n+j-2) }{g\in G_U}
  \end{array}\hspace{-.3cm}\right|,
\end{equation}
is orthogonal  with respect to the measure $\nu_{a,b,N}^{\M,U}$, with norm
\begin{align}\label{normqk}
&\langle \tilde q_n^{a,b,N;\M,U},\tilde q_n^{a,b,N;\M,U}\rangle_{\nu_{a,b,N}^{\M,U}}=\tilde\Phi_n^{a,b,N;\M,U}\tilde\Phi_{n+1}^{a,b,N;\M,U}\\\nonumber&\hspace{3cm}\times\frac{(n+n_U)!(N+b)!^2(N+b-n)!}
{n!(N+a-n-n_U)!(N+b-n+n_{G_U})!^2}.
\end{align}
Moreover the polynomials $\tilde q_n^{a,b,N;\M,U}(\lambda^{a_U,b_U}(x))$, $n\ge 0$, are eigenfunctions of a higher order difference operator of the form (\ref{hodo}) with
$$
-s=r=\sum_{f\in F_U}f-\binom{n_{F_U}}{2}+1.
$$
\end{theorem}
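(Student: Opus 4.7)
\medskip

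\textbf{Plan.} The argument follows the four-step structure used in the proof of Theorem \ref{th1}. The conceptual starting point is the identity $\lambda^{a_U,b_U}(x)+s_U=\lambda^{a,b}(x+\max(-1,\max U)+1)$, which identifies the variable change $x\mapsto x-s_U$ with a rigid translation of the underlying quadratic net. Under this translation, the support of $\nu_{a,b,N}^{\M,U}$ is carried into a subset of the support of the basic example $\nu_{a_U,b_U,N_U}^{\M}$; the set $F_U$ in (\ref{thesetf}) is precisely the ``Christoffel set'' that encodes both the automatic Geronimus block $\{a,\ldots,a+b-1\}$ and the extra nodes $\{a+b+u:u\in U\}$ forced by the Christoffel transform. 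Consequently, the formula (\ref{qusmei2k}) is morally the analog of (\ref{qusmeix}), but using the deformed Hahn-type polynomials $W_g^{a_U,b_U,N_U;\M}$ instead of the bare Hahn polynomials.

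\medskip

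\textbf{Key technical ingredient.} The crux is to prove an analog of Lemma \ref{lem2} for the measure $\nu_{a,b,N}^{\M,U}$: for the appropriate ranges of $n,m,s$,
\begin{equation*}
\langle R_{s}^{a_U,b_U,N_U}(x-s_U),x^{m}\rangle_{\nu_{a,b,N}^{\M,U}} \;=\; C_{s}^{U}\,(b{+}N{-}s{+}1)_{s}\sum_{g\in G_U}\tilde\psi_{g}^{m}\,W_{g}^{a_U,b_U,N_U;\M}(-s-1),
\end{equation*}
together with the corresponding off-by-one ``diagonal'' identity producing a nonzero boundary term when $s=n-n_{G_U}$. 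The weights $\tilde\psi_g^m$ would be defined as in (\ref{lasphi}) but relative to the polynomial $P_U(x)=\prod_{g\in G_U}(x+\lambda^{a_U,b_U}(-g-1))$, with residues taken at the double roots (those $g$ with $b_U\le g\le \lceil(a_U+b_U)/2\rceil-1$) exactly as in Remark \ref{urm}. The induction on $m$ used in \cite[Lemma 4.1]{DdI} and in the proof of Lemma \ref{lem2} applies unchanged here, because each $W_g^{a_U,b_U,N_U;\M}$ is still (up to the non-linear correction recorded in Remark \ref{urm2}) an eigenfunction of the second order difference operator (\ref{elsoph}) with parameters $a_U,b_U,N_U$.

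\medskip

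\textbf{Transfer of the four steps.} With the above identities in hand, Step 1 proceeds verbatim: the leading coefficient of $\tilde q_n^{a,b,N;\M,U}$ is a non-zero multiple of $\tilde\Phi_n^{a,b,N;\M,U}$, the first row of the determinant (\ref{qusmei2k}) becomes a linear combination of the rows indexed by $g\in G_U$ whenever $m<n$, and the norm formula (\ref{normqk}) drops out by combining the rows with the diagonal identity. Steps 2 and 3 (necessity and sufficiency of (\ref{cnysuk})) are then a routine induction on $n$, with the base case $\tilde\Phi_0^{a,b,N;\M,U}\neq 0$ established by the same upper-triangular matrix product $\Psi\tilde\Phi_0$ argument as in the proof of Lemma \ref{lem2}, now of size $n_{G_U}\times n_{G_U}$. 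For Step 4 (higher-order difference operator), we apply \cite[Theorem 3.1]{dudh} with base family $(R_n^{a_U,b_U,N_U}(\lambda^{a_U,b_U}(x)))_n$, algebra $\A^{\lambda^{a_U,b_U}}$, $\D$-operator given by $\epsilon_n=b+N-n+1$, and the polynomials $Y_g(x)=W_g^{a_U,b_U,N_U;\M}(-x-2)$, $g\in G_U$. Since (\ref{qusmei2k}) has the template form (\ref{qusmeig}) with $m=n_{G_U}$, the conclusion follows, and the order $\sum_{f\in F_U}f-\binom{n_{F_U}}{2}+1$ is exactly the value given by the degree count in \cite[Theorem 3.1]{dudh}.

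\medskip

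\textbf{Main obstacle.} The only genuinely delicate point is the analog of Lemma \ref{lem2}. While the inductive machinery is the same, one must verify that the weights $\tilde\psi_g^m$ are well-defined across the three regimes $g\in[b_U,\lceil(a_U+b_U)/2\rceil-1]$, $g\in[\lceil(a_U+b_U)/2\rceil,a_U-1]$, and $g\in[a_U,a_U+b_U-1]$ when some of the extra ``Christoffel elements'' $a+b+u$ land inside these intervals, and that the non-linear correction term appearing in Remark \ref{urm2} is correctly absorbed. Because $F_U$ always contains the block $\{a,\ldots,a+b-1\}$ responsible for the degeneracy, one can argue by the same limiting procedure $\rho_{a_s,b_s,N}^{F_U}$ described in Section \ref{sec1}, which keeps all the computations finite and supplies the correct expressions for the $W$-polynomials attached to each $g\in G_U$.
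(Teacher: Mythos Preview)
Your approach is essentially identical to the paper's: both reduce the orthogonality to an analog of Lemma~\ref{lem2} with the shifted parameters $(a_U,b_U,N_U)$ and index set $G_U$ in place of $(a,b,N)$ and $\{b,\ldots,a+b-1\}$, and both invoke the $\D$-operator machinery of \cite[Theorem~3.1]{dudh} (with $\epsilon_n=b_U+N_U-n+1=b+N-n+1$) for the eigenfunction claim. The only imprecision is that your third regime for $\tilde\psi_g^m$ should read ``$g\in G_U$ otherwise'' rather than $g\in[a_U,a_U+b_U-1]$, since $G_U$ can contain elements outside $\{b_U,\ldots,a_U+b_U-1\}$; the paper handles this with an ``otherwise'' clause in its definition of $\psi_{U,g}^m$.
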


First of all, we explain how we have found the formula (\ref{qusmei2k}) for the orthogonal polynomials with respect to the measure $\nu_{a,b,N}^{\M,U}$. For $s$ small enough, write (in a similar form to (\ref{losas}) in Section \ref{sec1})
$$
a_{U,s}=a_U-s/M,\quad b_{U,s}=b_U+s
$$
so that $a_{U,s}, b_{U,s}\not \in \ZZ$. Consider the measure $\rho ^{F_U}_{a_{U,s}, b_{U,s},N_U}$ defined by (\ref{mqs}). When all the parameters in $\M$ are equal, that is, $\M=\{M,\cdots, M\}$, a careful computation shows that the measure $\rho ^{F_U}_{a_{U,s}, b_{U,s},N_U}$ converges to $\nu_{a,b,N}^{\M,U}$ as $s\to 0$.
Since the determinantal formula (\ref{qusmeix}) provides orthogonal polynomials with respect to $\rho ^{F_U}_{a_{U,s}, b_{U,s},N_U}$, we can then construct orthogonal polynomials with respect to $\nu_{a,b,N}^{\M,U}$ by taking limits in (\ref{qusmeix}) as $s\to 0$. As explained in Section \ref{sec1}, in order to avoid the collapse of the determinant (\ref{qusmeix}) when passing to the limit, we have to change the Hahn polynomials $h_g^{-a_U,-b_U,-2-N_U}$ by the polynomials $W_g^{a_U,b_U,N_U;\M}$ (\ref{losw}). In doing that we get the determinantal formula (\ref{qusmei2k}).

\begin{proof}[Proof of Theorem \ref{th3}]
We first prove that the polynomials $\tilde q_n^{a,b,N;\M,U}(\lambda^{a_U,b_U}(x))$, $n\ge 0$, are eigenfunctions of a higher order difference operator of the form (\ref{hodo}). As in Theorem \ref{th1}, this is a consequence of the determinantal formula (\ref{qusmei2k}). Indeed, write $p_n(x)=R_n^{a_U,b_U,N_U}(\lambda^{a_U,b_U}(x))$ and $\A$ for the algebra $\A^{\lambda}$ (\ref{defal}) with $\lambda(x)=x(x+a_U+b_U+1)$. Since $b_U+N_U-n+1=b+N-n+1$, we have again that $\epsilon_n=b+N-n+1$ defines a $\D$-operator for the sequence of Hahn polynomials $(R_n^{a_U,b_U,N_U})_n$ and the algebra $\A$, and $\xi_{n,i}=(b+N-n+1)_i$ (see Step 4 in the proof of Theorem \ref{th1}).
Hence, the polynomials $(\tilde q_n^{a_U,b_U,N_U;\M,U}(\lambda^{a_U,b_U}(x)))_n$ have the form (\ref{qusmeig}) and they are eigenfunctions of an operator of the form (\ref{hodo}). The order can be computed as in  \cite[Theorem 3.1]{dudh}.

The rest of the proof of Theorem \ref{th3} can be done as that of Theorem \ref{th1} but using the following version of Lemma \ref{lem2}.

We start by extending the definitions previous to the Lemma \ref{lem2} to the new scenario. In order to do that, we have to take into account that the parameters $a_U,b_U,N_U$ ((\ref{lospu1}) and (\ref{lospu2})) play in the polynomials $\tilde q_n^{a_U,b_U,N_U;\M,U}$ the role played by the parameters $a,b,N$ in the polynomials $q_n^{a,b,N;\M}$. In the same way, the set $G_U$ plays now the role of $\{b,\cdots, a+b-1\}$. It is not difficult to check that $\{b_U,b_U+1,\cdots, a_U-1\}\subset G_U$. Hence, we introduce the following auxiliary functions.

We define the polynomial $P_U$ as follows (compare with (\ref{elpp}))
\begin{equation*}\label{elppu}
P_U(x)=\prod_{g\in G_U}(x+\lambda^{a_U,b_U} (-g-1)).
\end{equation*}
It is not difficult to see that if $i=b_U,\cdots, a_U-1$, then $-\lambda^{a_U,b_U} (-i-1)$ is a double root of $P_U$.
Define then the polynomial $P_{U,i}$ by (compare with (\ref{elppi}))
\begin{equation*}\label{elppiu}
P_{U,i}(x)=\frac{(2i+1-a_U-b_U)P_U(x)}{(x+\lambda^{a_U,b_U} (-i-1))^2},
\end{equation*}
Since $\lambda^{a_U,b_U} (-i-1)=\lambda^{a_U,b_U} (i-a_U-b_U)$, we get that
$P_{U,i}=-P_{U,a_U+b_U-1-i}$ when
\begin{align}\label{lascru}
&\mbox{either $i=b_U,\cdots, \lceil\frac{a_U+b_U}{2}\rceil-2$}\\\nonumber &\hspace{2.5cm}\mbox{or $i=\lceil\frac{a_U+b_U}{2}\rceil-1$ and $a_U+b_U=2\lceil\frac{a_U+b_U}{2}\rceil$.}
\end{align}
Define also the numbers $u_{U,i}^m$, $m\ge 0$, by (compare with (\ref{losuim}))
\begin{equation*}\label{losuimu}
u_{U,i}^m=\begin{cases}\frac{m(\lambda^{a_U,b_U} (-i-1))^{m-1} P_{U,i}(-\lambda^{a_U,b_U} (-i-1))}{P_{U,i}'(-\lambda^{a_U,b_U} (-i-1))},& \mbox{if $i$ satisfies  (\ref{lascru}),}\\
0,& \mbox{otherwise.}\end{cases}
\end{equation*}
For $g\in G_U$, we finally define the sequences $(\psi^m_{U,g})_m$ as follows (compare with (\ref{lasphi}))
\begin{equation*}\label{lasphiu}
\psi^m_{U,g}=\begin{cases}\frac{\left((\lambda^{a_U,b_U} (-g-1))^m +u_{U,g}^m\right)\operatorname{Res}_{-\lambda^{a_U,b_U} (-g-1)}(1/P_U)}{W_g^{a_U,b_U,N_U;\M}(0)}, &g=b_U,\cdots,\lceil\frac{a_U+b_U}{2}\rceil -1, \\
\frac{(\lambda^{a_U,b_U} (-g-1))^m}{P_{U,g}(-\lambda^{a_U,b_U} (-g-1))h_g^{-a_U,-b_U,-2-N_U}(0)}, &g=\lceil\frac{a_U+b_U}{2}\rceil ,\cdots,a_U-1,\\
\frac{(\lambda^{a_U,b_U} (-g-1))^m \operatorname{Res}_{-\lambda^{a_U,b_U} (-g-1)}(1/P_U)}{W_g^{a_U,b_U,N_U;\M}(0)}, &\mbox{otherwise}.
\end{cases}
\end{equation*}
The following identities can be proved as (\ref{lasff}) and (\ref{lasff2}) in Lemma \ref{lem2}.
For $0\le n$, $0\le m\le n$ and $m-a+1\le s\le n$
\begin{align*}
&\frac{(N_U+1)!\langle R_{s}^{a_U,b_U,N_U}(x-s_U),(x-s_U)^m\rangle _{\nu_{a,b,N}^{\M,U}}}{(-1)^{n_{G_U}+s+1}(a_U-1)!(N+b)!}\\
&\hspace{4cm} =(b+N-s+1)_{s}\sum_{g\in G}\psi^m_{U,g} W_g^{a_U,b_U,N_U;\M}(-s-1),
\end{align*}
and for $n=0,1,\cdots, a+N-n_{U}+1$
\begin{align*}
&\frac{(-1)^{n+1}(N_U+1)!\langle R_{n-n_{G_U}}^{a_U,b_U,N_U}(x-s_U),(x-s_U)^n\rangle _{\nu_{a,b,N}^{\M,U}}}{(a_U-1)!(N+b)!(b+N-n+n_{G_U}+1)_{n-n_{G_U}}}\\\nonumber&\hspace{.7cm}
=\frac{(n+n_U)!(N+a-n-n_U+1)_n}{(-1)^{n+1}(a_U-1)!(N_U+2)_{a_U-1-n_U}}+\sum_{g\in G}\psi^m_{U,g} W_g^{a_U,b_U,N_U;\M}(-n+n_{G_U}-1).
\end{align*}

\end{proof}

\section{More pieces of the puzzle}\label{sec5}
When the finite set $U$ satisfies (\ref{lacsu}), we have found two nontrivial determinantal representations for the orthogonal polynomials
with respect to the Krall discrete measure $\nu_{a,b,N}^{\M,U}$.

We show in this Section  that actually this is also the case of the families of the orthogonal polynomials
with respect to all the Christoffel transforms studied in Section \ref{sec3}.
In particular, this includes other determinantal expression for
a sequence of orthogonal polynomials with respect to the basic example $\nu_{a,b,N}^{\M}$.

\begin{theorem}\label{th4} Let $a,b,N$ be nonnegative integers with $1\le b\le a\le N$. For a finite set $\M=\{M_0,\cdots, M_{b-1}\}$  consisting of $b$ real parameters, $M_i\not =0,1$, write $\M^{-1}$ for the set of parameters $\M^{-1}=\{1/M_0,\cdots , 1/M_{b-1}\}$. Let $U$ be a finite set of complex numbers satisfying (\ref{hipct}) and $n_S>0$ and  define the sequence $(\Psi_n^{a,b,N;\M,U})_n$ by
\begin{equation}\label{defphiur}
\Psi_n^{a,b,N;\M,U}=
\left|
  \begin{array}{@{}c@{}lccc@{}c@{}}
  & &&\hspace{-1.9cm}{}_{1\le j\le b+n_U} \\
    \dosfilas{(-N-a-b)_{n+j-1}W_{f}^{a,b,-2-N-a-b;\M^{-1}}(N+a+b-n-j+1) }{f\in \{a,a+1,\cdots, a+b-1\}}\\
     \dosfilas{R_{n-b+j-1}^{b,a,N}(\lambda^{a,b}(u)) }{u\in U}
  \end{array}\hspace{-.5cm}\right| .
\end{equation}
Then the measure $\nu_{a,b,N}^{\M,U}$ has a sequence of orthogonal polynomials if and only if
\begin{equation}\label{cnysur}
\Psi_n^{a,b,N;\M,U}(n)\not =0,\quad n=0,\cdots, n_S.
\end{equation}
In that case the sequence of polynomials $(r_n^{a,b,N;\M,U})_{n=0}^{n_S-1}$ defined by
\begin{equation}\label{qusmer}
r_n^{a,b,N;\M,U}(x)=
\frac{\left|
  \begin{array}{@{}c@{}lccc@{}c@{}}
  & R_{n-b+j-1}^{b,a,N}(x) &&\hspace{-2cm}{}_{1\le j\le b+n_U+1} \\
    \dosfilas{(-N-a-b)_{n+j-1}W_{f}^{a,b,-2-N-a-b;\M^{-1}}(N+a+b-n-j+1) }{f\in \{a,a+1,\cdots, a+b-1\}}\\
     \dosfilas{R_{n-b+j-1}^{b,a,N}(\lambda^{a,b}(u)) }{u\in U}
  \end{array}\hspace{-.5cm}\right|}{\prod_{u\in U}(x-\lambda^{a,b}(u))},
\end{equation}
is orthogonal  with respect to the measure $\nu_{a,b,N}^{\M,U}$, with norm
\begin{equation}\label{normqy}
\langle r_n^{a,b,N;\M,U},r_n^{a,b,N;\M,U}\rangle_{\nu_{a,b,N}^{\M,U}}=\frac{n!(-N-a-b)_n^2(N+b+1-n)_a\Psi_n^{a,b,N;\M,U}\Psi_{n+1}^{a,b,N;\M,U}} {(-1)^{n_U+b}(n+n_U)!(N+b+1)_a^2}.
\end{equation}
\end{theorem}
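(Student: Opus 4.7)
The plan is to mirror the four-step template used in the proof of Theorem \ref{th2}, with the roles of $R_s^{a,b,N}$ and $W_g^{a,b,N;\M}$ (indexed by $g\in\{b,\ldots,a+b-1\}$) now played by the ``reflected'' objects $R_s^{b,a,N}$ and $W_f^{a,b,-2-N-a-b;\M^{-1}}$ (indexed by $f\in\{a,\ldots,a+b-1\}$). First I would check degrees: expanding the determinant in (\ref{qusmer}) along the top row and using that $R_{n+n_U}^{b,a,N}(x)$ has leading coefficient $1/(n+n_U)!$, the leading coefficient of $r_n^{a,b,N;\M,U}$ is a nonzero multiple of $\Psi_n^{a,b,N;\M,U}/(n+n_U)!$, so $r_n$ has degree exactly $n$ iff $\Psi_n\ne 0$. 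Also $r_n$ is a genuine polynomial because setting $x=\lambda^{a,b}(u_0)$ for any $u_0\in U$ makes the top row coincide with a bottom row of (\ref{qusmer}), forcing the numerator to vanish.

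The core new ingredient is a dual version of Lemma \ref{lem2}: for $0\le m\le n$ and $m-b+1\le s\le n+n_U$,
\[
\langle R_s^{b,a,N}(x),\,x^m\rangle_{\nu_{a,b,N}^\M}\;=\;\kappa_{s}\sum_{f=a}^{a+b-1}\widetilde{\psi}^{m}_{f}\,(-N-a-b)_s\,W_f^{a,b,-2-N-a-b;\M^{-1}}(N+a+b-s),
\]
together with an extra diagonal term appearing only in the ``top'' case $s=n-b$ (i.e.\ $j=1$), which supplies the norm (\ref{normqy}). Here $\widetilde{\psi}^{m}_{f}$ is built exactly as in (\ref{lasphi}), but with the polynomial $\widetilde{P}(x)=\prod_{f=a}^{a+b-1}(x+\lambda^{a,b}(-f-1))$ (which has only simple roots, so no auxiliary numbers $u_i^m$ are needed) and the values $W_f^{a,b,-2-N-a-b;\M^{-1}}(0)$ in the denominators. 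Substituting this identity into the top row of (\ref{qusmer}) makes that row a linear combination of the remaining rows for $m\le n-1$, so $\langle r_n,x^m\rangle_{\nu_{a,b,N}^{\M,U}}=0$; the extra diagonal term for $m=n$ then produces the norm formula after the leading-coefficient computation just mentioned.

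To prove this dual identity I would induct on $m$ exactly as in Lemma \ref{lem2}. The base case $m=0$ is a direct evaluation of $\langle R_s^{b,a,N}(x),1\rangle_{\nu_{a,b,N}^\M}$ from the Geronimus presentation (\ref{lanu}); the Pochhammer factor $(-N-a-b)_s$ arises from the explicit coefficients (\ref{dhpol}) of $R_s^{b,a,N}$ at the support points, while the replacement $\M\to\M^{-1}$ reflects the fact that the mass $M_{x+b}$ at $\lambda^{a,b}(x)$ ($x=-b,\ldots,-1$) enters the dual partial-fraction decomposition with respect to $\widetilde{P}$ through its reciprocal. The induction step rests on the assertion that the shifted functions $W_f^{a,b,-2-N-a-b;\M^{-1}}(N+a+b-x-1)$, $f\in\{a,\ldots,a+b-1\}$, are eigenfunctions with eigenvalue $\lambda^{a,b}(-f-1)$ of a second-order difference operator of the form (\ref{elsoph}) associated with the measure $\nu_{a,b,N}^\M$; this follows from the limit representation (\ref{lim1b}) combined with the reflection identity (\ref{hcp}) and the formal substitution $N\to-2-N-a-b$.

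The main obstacle, and the bulk of the bookkeeping, is to verify cleanly that the joint substitution $N\to-2-N-a-b$ and $\M\to\M^{-1}$ intertwines the second-order difference operator used in Lemma \ref{lem2} with the one needed here, and that the base case $m=0$ matches with exactly the normalization stated in the theorem; once this is in hand the remaining Steps 1--3 of Theorem \ref{th2} transfer verbatim. In particular, Step 3 starts the induction from $\Psi_0^{a,b,N;\M,U}\ne 0$, which factors (by cofactor expansion along the $n_U$ rows of $R$-values at $U$) as a Vandermonde-type determinant in the points $\lambda^{a,b}(u)$, $u\in U$ (nonzero by (\ref{hipct})), times the dual base determinant $\Psi_0^{a,b,N;\M}$, whose nonvanishing is the dual analogue of the $\Phi_0^{a,b,N;\M}\ne 0$ statement proved at the start of Lemma \ref{lem2}.
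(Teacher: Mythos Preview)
Your plan is essentially the paper's own proof: reduce to the basic case $U=\emptyset$, establish a dual version of Lemma~\ref{lem2} built on a simple-root polynomial and on the fact that the shifted polynomials $W_f^{a,b,-2-N-a-b;\M^{-1}}$ are eigenfunctions of a second-order difference operator with eigenvalue $\lambda^{a,b}(-f-1)$, then run Steps~1--3 of Theorems~\ref{th1}/\ref{th2} verbatim. Two small normalization slips to fix when you carry this out: the paper's auxiliary polynomial is $Q(x)=\prod_{f=a}^{a+b-1}(x-a-b-\lambda^{a,b}(-f-1))$ (an $a+b$ shift of your $\widetilde P$), and the denominators in the dual $\psi_f^m$ use $W_f^{a,b,-2-N-a-b;\M^{-1}}(a+N+1)$ rather than $W_f(0)$, since the eigenfunctions enter the identities as $W_f(a+N-s)$ and the base value corresponds to $s=-1$.
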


Before going with the proof, we analyze the case $U=\emptyset$, that is, the basic example $\nu_{a,b,N}^{\M}$.
As in the previous Section, we explain how we have found the formula (\ref{qusmer}) for the orthogonal polynomials with respect to the measure $\nu_{a,b,N}^{\M}$. For $F_0=\{a,\cdots, a+b-1\}$, and for $s$ small enough, consider the numbers
$$
a_{s}=a-s/M,\quad b_{s}=b_U+s,\quad \hat a_s=-b-s/M,\quad \hat b_s=-a+s,\quad \hat N=N+a+b
$$
(see (\ref{losas}) and (\ref{hats}) in Section \ref{sec1}).
Consider the measure $\rho ^{F_0}_{a_{s}, b_{s},\hat N}$ defined by (\ref{mqs}). Since this measure is a Christoffel transform of the dual Hahn measure $\rho_{\hat a_s,\hat b_s,\hat N}$ (which it is well defined because $a_{s}, b_{s}\not \in \ZZ$), we can construct a sequence of orthogonal polynomials with respect to $\rho ^{F_0}_{a_{s}, b_{s},\hat N}$ by mean of the formula (see \cite[Theorem 2.5]{Sz}):
\begin{equation}\label{qusmerr}
p_n(x)=
\frac{\left|
  \begin{array}{@{}c@{}lccc@{}c@{}}
  & R_{n+j-1}^{\hat a_s,\hat b_s,\hat N}(x+a+b) &&\hspace{-0.7cm}{}_{1\le j\le b+1} \\
        \dosfilas{R_{n+j-1}^{\hat a_s,\hat b_s,\hat N}(\lambda^{\hat a_s,\hat b_s}(f)) }{i\in \{a,\cdots , a+b-1\}}
  \end{array}\hspace{-.3cm}\right|}{\prod_{i=1}^b(x+a+b-\lambda^{\hat a_s,\hat b_s}(f)))}.
\end{equation}
As explained in Section \ref{sec1}, when all the parameters in $\M$ are equal, that is, $\M=\{M,\cdots, M\}$,
the measure $\rho ^{F_0}_{a_{s}, b_{s},\hat N}$ converges to $\nu_{a,b,N}^{\M}$ as $s\to 0$.
Since the determinantal formula (\ref{qusmerr}) provides orthogonal polynomials with respect to $\rho ^{F_0}_{a_{s}, b_{s},\hat N}$, we can then construct orthogonal polynomials with respect to $\nu_{a,b,N}^{\M}$ by taking limits in (\ref{qusmerr}) as $s\to 0$.

On the one hand, a careful computation using the duality (\ref{sdm2b}) and the identity (\ref{hcp}) shows that for $b\le n$ and $a\le f\le a+b-1$
\begin{align}\label{pmu1}
&\lim_{s\to 0}\frac{1}{s}R_{n}^{\hat a_s,\hat b_s,\hat N}(\lambda^{\hat a_s,\hat b_s}(f))\\\nonumber &\hspace{1cm}=\frac{(1-M)(-N-a-b)_n W_f^{a,b,-2-N-a-b;\M^{-1}}(N+a+b-n)}{(-1)^fM(n-b+1)_b(f-b)!(-N-a-b)_f}.
\end{align}
On the other hand, using the identity (\ref{dhpn}), we have
\begin{equation}\label{pmu2}
\frac{R_{n}^{-b,-a,N+a+b}(x+a+b)}{\prod_{f=a}^{a+b-1}(x+a+b-\lambda^{-a,-b}(f))}=\frac{R_{n-b}^{b,a,N}(x)}{(n-b+1)_b}.
\end{equation}
Using (\ref{pmu1}) and (\ref{pmu2}) we see that for $\M=\{M,\cdots, M\}$ and $U=\emptyset$, the limit of the polynomials (\ref{qusmerr}) are the polynomials (\ref{qusmer}) (after renormalization). This is the way we have found (\ref{qusmer}).

\begin{proof}
We first consider the  basic example $\nu_{a,b,N}^{\M}$, i.e., $U=\emptyset$.

The key is again some identities of the kind displayed in Lemma \ref{lem2}. More precisely:
define the polynomial $Q$ as follows
\begin{equation}\label{elqq}
Q(x)=\prod_{j=a}^{a+b-1}(x-a-b-\lambda^{a,b} (-j-1)).
\end{equation}
Note that the roots of $Q$ are simple.

For $f=a,\cdots, a+b-1$, we next define the sequences $(\psi^m_f)_m$ as follows
\begin{equation}\label{lasphiy}
\psi^m_f=\frac{(a+b+\lambda^{a,b} (-f-1))^m}{Q'(a+b+\lambda^{a,b} (-f-1))W_f^{a,b,-2-N-a-b;\M^{-1}}(a+N+1)}.
\end{equation}
We then have for $0\le n\le b+N$,  $0\le m\le n$ and $n-b+1\le s$
\begin{align}\label{lasffy}
&\langle R_{s}^{b,a,N}(x),(x+a+b)^m\rangle _{\nu_{a,b,N}^\M}=\frac{(b-1)!(N+2)_{b-1}(-a-b-N)_{s+b}}{(-1)^{b}(b+N+1)_a}\\\nonumber
&\hspace{.8cm}\times \sum_{f=a}^{a+b-1}\psi^m_f W_f^{a,b,-2-N-a-b;\M^{-1}}(a+N-s),
\end{align}
and for $0\le n\le N+b$
\begin{align}\label{lasff2y}
&\langle R_{n-b}^{b,a,N},(x+a+b)^n\rangle _{\nu_{a,b,N}^\M}
=\frac{n!(b+N+1-n)_a(-a-b-N)_n^2}{(b+N+1)_a^2}\\\nonumber&\hspace{.3cm}+\frac{(b-1)!(N+2)_{b-1}(-a-b-N)_{n}}{(-1)^{b}(b+N+1)_a}
\sum_{f=a}^{a+b-1}\psi^n_f W_f^{a,b,-2-N-a-b;\M^{-1}}(a+b+N-n).
\end{align}
As mentioned in the proof of Lemma \ref{lem2}, these kind of identities appear in all the families of Krall-discrete polynomials (see \cite[p. 69, 77]{DdI}, \cite[p. 380-381]{DdI2} for the Krall Charlier, Krall Meixner and Krall Hahn polynomials, respectively). The identities (\ref{lasffy}) and (\ref{lasff2y}) are completely similar to the identities (\ref{lasid}) and (\ref{lasid2}) for the Dual Hahn polynomials and the measure $\rho_{a,b,N}^F$ (\ref{mqs}) when the finite set $F$ satisfies $a,b\ge \max F+1$. Indeed, on the one hand, all the roots of the polynomial $Q$ (\ref{elqq}) are simple as those of the polynomial (\ref{elp}) (compare with the situation in Lemma \ref{lem2} explained in  Remark \ref{urm}). And, on the other hand, proceeding as in Remark \ref{urm2}, one can see that the polynomials $W_f^{a,b,-2-N-a-b;\M^{-1}}(a+N-x)$, $a\le f\le a+b-1$, which appear in the right hand side of the identities (\ref{lasffy}) and (\ref{lasff2y}), are eigenfunctions of the second order difference operator
\begin{equation*}\label{elsoph2}
D=A(x)\Sh_{-1}+B(x)\Sh_0+C(x)\Sh_{1},
\end{equation*}
where
\begin{align*}
A(x)&=(x+1)(x-a-N),\quad C(x)=(x-N-1)(x+b), \\\nonumber
B(x)&=-A(x-1)-C(x+1),
\end{align*}
and
$$
D(W_f^{a,b,-2-N-a-b;\M^{-1}}(a+N-x))=\lambda^{a,b}(-f-1)W_f^{a,b,-2-N-a-b;\M^{-1}}(a+N-x)
$$
(note that the eigenvalues $\lambda^{a,b}(-f-1)$ define the polynomial $Q$ (\ref{elqq})).
$D$ is the same second order difference operator with respect to which the Hahn polynomials $h_f^{-a,-b,N+a+b}(a+N-x)$, $0\le f$, are eigenfunctions (see (\ref{defbc})) (compare with the situation in Lemma \ref{lem2} explained in the Remarks \ref{urm2}).

Taking this into account, the identities (\ref{lasffy}) and (\ref{lasff2y}) can be proved in a similar way to the identities (\ref{lasid}) and (\ref{lasid2}).

The basic example $\nu_{a,b,N}^{\M}$ (i.e., $U=\emptyset$) can now be proved as Theorem \ref{th1}, and  the general case when $U\not =\emptyset$  can be proved as Theorem \ref{th2}.
\end{proof}

We finish this Section pointing out that we have constructed three different determinantal formulas for the orthogonal polynomials with respect to the Krall dual Hahn measure $\nu_{a,b,N}^{\M,U}$ when $U$ satisfies (\ref{lacsu}). The first one is (\ref{qusmei2u}) whose determinant has size $a+n_U+1$. The second one is (\ref{qusmei2k}) whose determinant has size $\max\{a+b-1,a+b+U\}-n_{U}+1$ (for the computation of the size we have used (\ref{spmu})). The third one is (\ref{qusmer}) whose determinant has size $b+n_U+1$. Note, that the size of those determinants can be very different. For instance, for $a=5,b=2$ and $U=\{-2,0,1,5,6\}$, the size of the three determinants are $11$, $9$ and $8$, respectively. None of these three determinants can be transformed in some of the other determinants by combining rows
and columns.
\bigskip

\section{The case $a\le b$}
In the previous Sections, we have assumed that $b\le a$. As far as we know, the dual Hahn polynomials do not have any symmetry between the parameters $a$ and $b$, so the case $a\le b$ needs some specific changes to be handled. Those changes are however rather natural: only the basic measure
$\nu_{a,b,N}^\M$ and the polynomials $(W_g^{a,b,N;\M})_g$ need  to be slightly adapted.

When $a\le b$, the set of real parameters is now $\M=\{M_0,\cdots, M_{a-1}\}$, $M_i\not =0,1$, and its number of elements is $a$.
The basic case corresponds with the discrete measure $\nu_{a,b,N}^\M$ supported in the finite quadratic net
$$
\{\lambda^{a,b}(i):i=-a,\cdots ,N\}
$$
and defined by
\begin{align}\label{lanuf}
\nu_{a,b,N}^{\M}=&\sum_{x=-a}^{-1}\frac{(2x+a+b+1)(N+1-x)_{x+b}}{(N+b+1)_{x+a+1}}M_{x+a}\delta_{\lambda^{a,b}(x)}\\\nonumber
&\qquad +\frac{(N+1)_b^2}{(b+1)_{a-b}}\sum_{x=0}^N \frac{\rho_{b,a,N}(x)}{\prod_{i=0}^{b-1}(x+a+i+1)(x+b-i)}\delta _{\lambda^{a,b}(x)},
\end{align}
where $\rho_{b,a,N}$ is the dual Hahn measure (see (\ref{masdh}) above).

We next define the polynomials $(W_g^{a,b,N;\M})_g$.

For $g\in\{\lceil\frac{a+b}{2}\rceil,\cdots ,b-1\}$, we use again the limit (\ref{lim1c}) and define
$$
W_g^{a,b,N;\M}(x)=\lim_{s\to 0}\frac{1}{s}\left(h_{g}^{-a-s,-b,-2-N}(x)-\frac{h_{g}^{-a-s,-b,-2-N}(-2-N)}{h_{-g+a+b-1}^{-a-s,-b,-2-N}(-2-N)}h_{-g+a+b-1}^{-a-s,-b,-2-N}(x)\right).
$$
The reason why we have substituted $0$ by $-2-N$ in the previous limit with respect to (\ref{lim1c}) is to preserve
the symmetry of the Hahn polynomials with respect to the interchange of the parameters $a$ and $b$. Indeed, it is easy to see that
\begin{equation}\label{fi2a}
(-1)^nW_{g}^{a,b,N;\M}(x)=W_{g}^{b,a,N;\M^{-1}}(-2-N-x).
\end{equation}

For $g\not \in\{\lceil\frac{a+b}{2}\rceil,\cdots ,b-1\}$, we define $W_g^{a,b,N;\M}$ as follows
\begin{equation}\label{loswf}
\begin{cases}
 (-1)^{b+g}(g-b)!&\\
\hspace{.2cm}\times \left[(b+a-g-1)!(-x)_ah_{g-a}^{a,-b,-2-N-a}(x-a)\right.&\\
\hspace{.4cm} \left.+\frac{(g-a)!(N+a+b+1-g)_{2g-a-b+1}}{M_{g-b}-1}h_{a+b-g-1}^{-a,-b,-2-N}(x)\right],&b\le g\le a+b-1,\vspace{.2cm}\\
 h_g^{-a,-b,-2-N}(x), & \mbox{otherwise}.
 \end{cases}
\end{equation}
Notice that only the polynomial $W_{i+b}^{a,b,N;\M}$ depends on the parameter $M_i$, $i=0,\cdots , a-1$.

The finite set $\{b,b+1,\cdots, a+b-1\}$ which appears in the determinants (\ref{defphi}) and (\ref{qusmei2}) remains the same (because of the same reasons
explained in Section 3 for the case $b\le a$).

With these changes, Theorem \ref{th1} works in the same way that for the case $b\le a$. More precisely:

\begin{theorem}\label{th1lq} Let $a,b,N$ be nonnegative integers with $1\le a\le b\le N$, and write $\M=\{M_0,\cdots, M_{a-1}\}$ for a finite set consisting of $a$ real parameters, $M_i\not =0,1$. Then the measure $\nu_{a,b,N}^{\M}$ (\ref{lanuf}) has a sequence of orthogonal polynomials if and only if
\begin{equation*}\label{cnysf}
\Phi_n^{a,b,N;\M}=\left|
  \begin{array}{@{}c@{}lccc@{}c@{}}
  &  &&\hspace{-.9cm}{}_{1\le j\le a} \\
    \dosfilas{W_{g}^{a,b,N;\M}(-n+j-1) }{g\in \{b,b+1,\cdots, a+b-1\}}
  \end{array}\hspace{-.3cm}\right| \not =0,\quad n=0,\cdots, N+a+1.
\end{equation*}
In that case the sequence of polynomials $(q_n^{a,b,N;\M})_{n=0}^{N+a}$ defined by
\begin{equation}\label{qusmei2f}
q_n^{a,b,N;\M}(x)=
\left|
  \begin{array}{@{}c@{}lccc@{}c@{}}
  & \frac{(-1)^{j-1}}{(b+N-n+j)_{a+1-j}}R_{n-j+1}^{a,b,N}(x) &&\hspace{-.6cm}{}_{1\le j\le a+1} \\
    \dosfilas{W_{g}^{a,b,N;\M}(-n+j-2) }{g\in \{b,b+1,\cdots, a+b-1\}}
  \end{array}\hspace{-.3cm}\right| ,
\end{equation}
is orthogonal  with respect to the measure $\nu_{a,b,N}^{\M}$, with norm
\begin{equation}\label{normqf}
\langle q_n^{a,b,N;\M},q_n^{a,b,N;\M}\rangle_{\nu_{a,b,N}^{\M}}=\frac{(N+b)!^2\Phi_n^{a,b,N;\M}\Phi_{n+1}^{a,b,N;\M}}{(N+a-n)!(N+b-n)!(N+b-n+1)_a^2}.
\end{equation}
Moreover, the polynomials $q_n^{a,b,N;\M}(\lambda^{a,b}(x))$, $n\ge 0$, are also eigenfunctions of a higher order difference operator of the form (\ref{hodo}) with $-s=r=ab+1$.
\end{theorem}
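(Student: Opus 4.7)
The plan is to mimic the four-step proof of Theorem \ref{th1} line by line. Only the analogue of Lemma \ref{lem2} needs re-deriving in the present setting; once it is in place, Steps 1--3 (orthogonality of $q_n^{a,b,N;\M}$ against lower powers, computation of the norm (\ref{normqf}), and the induction on $n$ proving $\Phi_n^{a,b,N;\M}\ne 0$) and Step 4 (existence of the higher order difference operator, via \cite[Theorem 3.1]{dudh} together with the $\D$-operator $\epsilon_n=b+N-n+1$ for the dual Hahn family) transfer without change.

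A pleasant simplification occurs in the present case. Every $g$ in the determinant index set $\{b,b+1,\ldots,a+b-1\}$ satisfies $g\ge b\ge\lceil(a+b)/2\rceil$ and $g\ge a$, so only the \emph{third case} of (\ref{lasphi}) is operative; the middle range polynomials $W_g^{a,b,N;\M}$ for $g\in\{\lceil(a+b)/2\rceil,\ldots,b-1\}$, defined only for consistency with the symmetry (\ref{fi2a}), play no role in the identities to be proved. Moreover, for distinct $g,g'\in\{b,\ldots,a+b-1\}$ one checks that $g+g'\ge 2b+1>a+b-1$, and hence $\lambda^{a,b}(-g-1)\ne\lambda^{a,b}(-g'-1)$, so the polynomial $P(x)=\prod_{g=b}^{a+b-1}(x+\lambda^{a,b}(-g-1))$ (as in (\ref{elpp})) has only simple roots. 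The relevant sequence therefore reduces to
\[
\psi^m_g=\frac{(\lambda^{a,b}(-g-1))^m}{P'(-\lambda^{a,b}(-g-1))\,W_g^{a,b,N;\M}(0)},\qquad g=b,\ldots,a+b-1.
\]

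The core task is then to establish, for $0\le n$, $0\le m\le n$ and $m-a+1\le s\le n$, the identity
\[
\frac{(-1)^{a+s+1}\langle R_{s}^{a,b,N},x^m\rangle _{\nu_{a,b,N}^\M}}{(a-1)!(N+2)_{b-1}}=(b+N-s+1)_s\sum_{g=b}^{a+b-1}\psi^m_g\, W_g^{a,b,N;\M}(-s-1),
\]
together with the analogue of (\ref{lasff2}) for $n=0,\ldots,N+a$. Following the scheme of \cite[Section 4]{DdI} and the proof of Lemma \ref{lem2}, both identities would be proved by induction on $m$: the base case $m=0$ is a direct moment computation using the explicit formula (\ref{lanuf}) and the Geronimus structure $\prod_{i=0}^{b-1}(x-\lambda^{a,b}(i-b))\nu_{a,b,N}^\M=\mathrm{const}\cdot\rho_{b,a,N}$, which still holds in this case since the continuous part of (\ref{lanuf}) is constructed exactly as in the $b\le a$ setting. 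The induction step exploits the fact that, for $g\in\{b,\ldots,a+b-1\}$, the polynomial $W_g^{a,b,N;\M}(-x-1)$ is a genuine eigenfunction (with no correction term, by the previous paragraph) of the second order difference operator (\ref{elsoph}) with eigenvalue $\lambda^{a,b}(-g-1)$.

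The main obstacle is the bookkeeping required to verify this eigenfunction property: the two Hahn summands $(-x)_a h_{g-a}^{a,-b,-2-N-a}(x-a)$ and $h_{a+b-g-1}^{-a,-b,-2-N}(x)$ combining in (\ref{loswf}) to produce $W_g^{a,b,N;\M}(x)$ must be shown to share the eigenvalue $\lambda^{a,b}(-g-1)$ under $D$, so that every linear combination (and in particular $W_g^{a,b,N;\M}$ with the specific coefficient involving $M_{g-b}-1$) is again an eigenfunction. This is a direct but somewhat tedious calculation using (\ref{defbc}) applied to each Hahn polynomial separately. Once this eigenfunction relation is established, the nonvanishing $\Phi_0^{a,b,N;\M}\ne 0$ follows from the upper-triangular matrix product argument at the start of the proof of Lemma \ref{lem2}, and the remainder of Theorem \ref{th1lq} — orthogonality via cofactor expansion along the top row of (\ref{qusmei2f}), the norm formula (\ref{normqf}), and the higher order difference operator with $-s=r=ab+1$ — follows verbatim from the proof of Theorem \ref{th1}.
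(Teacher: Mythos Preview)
Your proposal is correct and follows exactly the route the paper indicates: the paper gives no separate proof of Theorem~\ref{th1lq}, stating only that ``Theorem~\ref{th1} works in the same way,'' and your four-step plan (analogue of Lemma~\ref{lem2}, then Steps 1--4 verbatim) is precisely that. Your observation that for $a\le b$ the polynomial $P$ has only simple roots and that consequently only the last branch of (\ref{lasphi}) survives is a genuine simplification not spelled out in the paper; it explains why the correction terms of Remark~\ref{urm2} disappear here. One minor remark: rather than verifying the eigenfunction property of $W_g^{a,b,N;\M}(-x-1)$ for $g\in\{b,\ldots,a+b-1\}$ by direct computation on the two Hahn summands in (\ref{loswf}), you can argue as the paper does in Remark~\ref{urm2} for the range $a\le g\le a+b-1$, namely via a limit expression analogous to (\ref{lim1b}); since the pre-limit Hahn polynomial is an eigenfunction of an operator depending analytically on $s$, the eigenfunction relation passes to the limit.
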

All the results in Sections \ref{sec3}, \ref{sec4} and \ref{sec5} can be adapted in the same form as Theorem \ref{th1}, with the only additional change of the assumptions (\ref{lacsu}) and (\ref{lacsu2}) in Section \ref{sec4} that have to be changed to
\begin{align*}
\lambda^{a,b}(u)&\in \{\lambda^{a,b}(i):\mbox{$-a\le i\le -2$ or $0\le i\le N$}\},\quad u\in U,\\
U&\subset \{i:-a-b+1\le i\le -b-1\}\cup \{i:1\le i\le N\},
\end{align*}
respectively.

\bigskip

\noindent
\textit{Mathematics Subject Classification: 42C05, 33C45, 33E30}

\noindent
\textit{Key words and phrases}: Orthogonal polynomials. Krall discrete polynomials.
Dual Hahn polynomials.

     \end{document}